\documentclass[11pt]{amsart}
\usepackage{graphicx}

\usepackage{amssymb}
\usepackage{epstopdf}
\usepackage{amsmath}
\usepackage{amscd}
\usepackage{amsthm,amsfonts,amsxtra,amssymb,amscd}
\usepackage[all]{xy}
\usepackage{enumerate}
%\usepackage[active]{srcltx}

%\usepackage{pdfsync}
%\DeclareGraphicsRule{.tif}{png}{.png}{`convert #1 `basename #1 .tif`.png}

%\usepackage{makeidx}
%\usepackage{notebook2e, latexsym}

\usepackage{hyperref}

\theoremstyle{plain}
\newtheorem*{lemma*}{Lemma}
\newtheorem{lemma}[subsection]{Lemma}
\newtheorem*{theorem*}{Theorem}
\newtheorem{theorem}[subsection]{Theorem}
\newtheorem*{proposition*}{Proposition}
\newtheorem{proposition}[subsection]{Proposition}
\newtheorem*{corollary*}{Corollary}
\newtheorem{corollary}[subsection]{Corollary}
\theoremstyle{definition}
\newtheorem*{definition*}{Definition}
\newtheorem{definition}[subsection]{Definition}
\newtheorem*{example*}{Example}
\newtheorem{example}[subsection]{Example}
\theoremstyle{remark}
\newtheorem*{remark*}{Remark}
\newtheorem{remark}[subsection]{Remark}

 \def\infdex{\text {infdex}}

\def\infdex {\text {{\rm infdex}}}
%\makeindex

\newcommand{\g}{\mathfrak{g}}

%\frontmatter
\title{Infinitesimal index: cohomology computations }
\author{
C. De Concini,\quad
C. Procesi,\quad M. Vergne}
%\thanks{The first two authors are partially supported by the Cofin 40 \%, MIUR}
\dedicatory{Dedicated to Tonny Springer}
\begin{document}
\begin{abstract}
 In this note we study the equivariant cohomology with compact supports
 of the zeroes of the moment map for  the cotangent bundle of a linear
 representation of a torus  and some of its notable subsets, using the
 theory of the infinitesimal index, developed in \cite{dpv34}. We show
 that, in analogy  to the case of equivariant $K$-theory   dealt with in
 \cite{dpv1} using  the index of transversally elliptic operators, we
 obtain isomorphisms with notable spaces of splines  studied in
 (\cite{DM1}, \cite{DM3}). \end{abstract}
\maketitle
\section{Introduction} Let $G$ be a compact Lie group with Lie algebra $\mathfrak g$, $N$ a manifold with $G$-action and equipped with a $G$--equivariant 1--form $\sigma$.

From this setting, one has a moment map  $\mu^\sigma:N\to\mathfrak g^*$.  A particularly important case is that of $N=T^*M$, the cotangent bundle of a manifold with a $G$ action, equipped with the canonical action form. In this case, the zeroes of the moment map is a subspace $T^*_GM$  whose equivariant $K$--theory is  strongly related to the index of transversally elliptic operators as shown in \cite{At}.

In order to understand explicit formulas for such an index, in \cite{dpv34} we have introduced the {\em infinitesimal index} infdex, a map from the equivariant cohomology with compact supports of the zeroes of the moment map to distributions  on $\mathfrak g^*$.

We have proved several properties for this map which, at least in the case of  the space  $T^*_GM$, in principle allow us to reduce the computations to the case in which $G$ is a torus   and the manifold is  a complex linear representation of  $G$. A finite dimensional complex representation of a torus  is   the direct sum  of one dimensional representations given by characters. If $X$ is a list of characters, we denote by $M_X$ the corresponding linear representation which is naturally filtered by open sets $M_{X,\geq i}$ where the dimension of the orbit  is $\geq i$.

In this paper, we first compute the equivariant cohomology of the open sets  $M_{X,\geq i}$, and also of some slightly more general open sets in $M_X$. This part of our paper, namely Sections 2 and 3, does not use the notion of infinitesimal index. The results are obtained from the structure of the algebra $S[\mathfrak g^*][(\prod_{a\in X}a)^{-1}]$ as a module over the Weyl algebra studied in \cite{dp1}.

In Section 4 we apply the results we have  obtained to the equivariant cohomology of the open set $M_X^{fin}$  of points with finite stabilizer.  Using Poincar\'e duality, we remark that the equivariant cohomology with compact supports $H^*_{G,c}(T_G^*M_X^{fin})$ of $T_G^*M_X^{fin}$ is isomorphic as a $S[\mathfrak g^*]$-module to a remarkable  finite dimensional  space $D(X)$ of polynomial functions on $\mathfrak g^*$, where $S[\mathfrak g^*]$ acts by differentiation. The space $D(X)$ is  defined as   the space of                                                             solutions of a set of linear partial differential equations combinatorially associated to $X$ and has been of importance   in approximation theory (see for example \cite{DM1}, \cite{DM3}).

At this point the notion of infinitesimal index comes into play. We show in Theorem \ref{finitu} that the infinitesimal index gives an isomorphism between $H^*_{G,c}(T_G^*M_X^{fin})$ and $D(X)$.  After this, we show that, for each $i$,    the infinitesimal index  establishes an isomorphism between $H_{G,c}^*(T_G^*M_{X,\geq i})$  and a space of {\em splines} $\tilde{\mathcal G}_{i }(X) $,   introduced in \cite{dpv},  (cf. \eqref{itilg})  and generalizing $D(X)$.

It should be mentioned that, in the previous paper  \cite{dpv1}, similar results have been proved, using the index of transversally elliptic differential operators, in order to compute the equivariant K-theory of the spaces $T_G^*M_{X,\geq i}$.

This paper represents a sort of  ``infinitesimal" version of \cite{dpv1} and  will  be used, in a forthcoming  paper \cite{dpv6},   to  give explicit formulas for  the index of transversally elliptic operators.

%\begin{acknowledgments}
Finally it is a pleasure to thank Michel Brion for a number of useful conversations and remarks and the referee for a very careful reading and extremely useful suggestions.

%\end{acknowledgments}

            \section{A special module}
 \subsection{A module filtration}

 Let $G$ be a compact torus with Lie algebra $\mathfrak g$ and character group $\Lambda$. We are going to consider $\Lambda$ as a lattice in $\mathfrak g^*$.

 We need to recall some general results  proved in \cite{dp1}.  Let us fix a list $X=(a_1,\ldots ,a_m)$   of non zero characters in $\Lambda\subset \mathfrak g^*$.
 Let $S[\mathfrak g^*]$ be the symmetric algebra on $\mathfrak g^*$ or in other words the algebra of polynomial functions  on $\mathfrak g$. For a list $Y$ of  vectors, let us set $d_Y:=\prod_{a\in Y }a\in S[\mathfrak g^*]$.

     \begin{definition}
A subspace  $\underline s$ of $\mathfrak g^*$ is called {\it rational} (relative to $X$) if  $\underline s =\langle X\cap \underline s \rangle $.
\end{definition}
In general if $A$ is a set of vectors  we denote by $\langle A \rangle $ the linear span of $A$.

We shall denote by $\mathcal S_X$ the set of rational subspaces and, for a given $0\leq k\leq s$, by $\mathcal S_X(k)$ the set of the rational subspaces of dimension $k$.

We shall assume always that $X$ spans $\mathfrak g^*$ and need to recall that a {\it cocircuit}
in $X$  is a sublist of $X$    of
the form $Y:= X\setminus H$  where $H$ is a rational hyperplane.
 % The  polynomials  $d_Y:=\prod_{a\in Y} a\in S[\mathfrak g^*]$, as $Y$ runs over the cocircuits, give a system of polynomial equations  $d_Y=0$.
 \begin{definition}\label{IIX}
We denote by  $I_X$ the   ideal in $S[\mathfrak g^*]$ generated by the elements $d_Y$'s, as $Y$ runs over the cocircuits.
\end{definition}

One knows that $I_X$  defines a scheme $V_X$  supported at 0 and of length $d(X)=\dim(S[\mathfrak g^*]/I_X)$ where  $d(X)$ equals the number of bases extracted from $X$,  (see \cite{dp1}, Theorem 11.13).

 Consider the localized algebra
   $R_X:=S[\mathfrak g^*][d_X^{-1}]$, which is the coordinate ring of the complement of the hyperplane arrangement   in  $\mathfrak g$, defined by the equations $a=0,\ a\in X$.

This algebra is a cyclic module  over the Weyl algebra $W[\mathfrak g]$ of differential operators  with polynomial coefficients, generated by $d_X^{-1}$ (Theorem 8.22 of  \cite{dp1}).

In \cite{dp1}, we have seen that  this $W[\mathfrak g]$-module has a canonical filtration,  the {\it filtration by polar order}, where we put in degree of filtration $\leq k$  all fractions in which the denominator is a product of elements in $X$ spanning a  rational subspace of dimension $\leq k$ (we say that   $k$    is the polar order on the boundary divisors).  We denote this subspace by $R_{X,k}$.  One of the important facts  (Theorem 8.10 in \cite{dp1}) is that
\begin{theorem}\label{ciso}
The  $W[\mathfrak g]$-module $R_{X,k}/ R_{X,k-1}$ is semisimple, its isotypic components are in 1--1 correspondence with the rational subspaces  of  dimension $k$ and such a isotypic component is generated by the class of $1/d_{ X\cap \underline s}$.
\end{theorem}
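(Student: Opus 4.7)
The plan is to prove three nested statements built into the conclusion: that $R_{X,k}/R_{X,k-1}$ is the sum of the $W[\mathfrak g]$-submodules $\bar L_{\underline s}$ generated by the classes $[1/d_{X\cap\underline s}]$ for $\underline s\in\mathcal S_X(k)$; that this sum is direct; and that each $\bar L_{\underline s}$ is isotypic, with distinct $\underline s$ yielding inequivalent isotypes. Here $L_{\underline s}\subset R_{X,k}$ denotes the cyclic $W[\mathfrak g]$-submodule on $1/d_{X\cap\underline s}$ and $\bar L_{\underline s}$ its image modulo $R_{X,k-1}$.

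\textbf{Spanning.} $R_{X,k}$ is spanned, modulo $R_{X,k-1}$, by fractions $p/d_Y$ with $Y\subseteq X$ a sublist whose span is some $\underline s\in\mathcal S_X(k)$. Since then $Y\subseteq X\cap\underline s$,
\[
\frac{p}{d_Y}=\frac{p\cdot d_{(X\cap\underline s)\setminus Y}}{d_{X\cap\underline s}}\in L_{\underline s},
\]
because polynomial multiplication lies in $W[\mathfrak g]$. Higher-power denominators are first reduced to the squarefree case by differentiating in directions of $\mathfrak g$, an operation in $W[\mathfrak g]$ that does not raise polar order.

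\textbf{Directness.} If $\sum_{\underline s}m_{\underline s}\in R_{X,k-1}$ with $m_{\underline s}\in L_{\underline s}$, I would fix $\underline s$ and examine the leading singular behaviour at the generic point of $\underline s^\perp\subset\mathfrak g$. For $\underline s'\ne\underline s$ of dimension $k$ there is some $a\in X\cap\underline s'$ with $a\notin\underline s$, so the hyperplane $\{a=0\}$ does not contain $\underline s^\perp$ and $m_{\underline s'}$ is regular generically on $\underline s^\perp$. Elements of $R_{X,k-1}$ have polar order $<k$, i.e.\ strictly milder singularities along $\underline s^\perp$. Comparing leading polar behaviour along each $\underline s^\perp$ separately forces $m_{\underline s}\in R_{X,k-1}$.

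\textbf{Isotypicity.} Pick a splitting $\mathfrak g=U\oplus\underline s^\perp$, inducing $\mathfrak g^*=\underline s\oplus V$ with $V$ the annihilator of $U$, and the tensor decomposition $W[\mathfrak g]\simeq W[U]\otimes W[\underline s^\perp]$. Since $X\cap\underline s\subset\underline s=U^*$, one has
\[
L_{\underline s}\simeq S[\underline s]\bigl[d_{X\cap\underline s}^{-1}\bigr]\otimes S[V]
\]
as modules over $W[U]\otimes W[\underline s^\perp]$; the second factor is simple over $W[\underline s^\perp]$. Modulo $R_{X,k-1}$ the first factor is replaced by the top polar quotient of $S[\underline s][d_{X\cap\underline s}^{-1}]$, which is the full-dimensional case ($k=\dim\underline s$) of the theorem for the torus with dual $\underline s$ and character list $X\cap\underline s$; granting that this quotient is simple over $W[U]$, the tensor of simples is a simple $W[\mathfrak g]$-module and hence $\bar L_{\underline s}$ is isotypic. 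Finally, the $\mathcal O$-module support of $\bar L_{\underline s}$ is $\underline s^\perp$, and this determines $\underline s$, so distinct $\underline s$ yield inequivalent isotypes.

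The main obstacle is the full-dimensional base case invoked in the last step: showing that when $\langle X\rangle=\mathfrak g^*$, the top polar quotient $R_{X,s}/R_{X,s-1}$ is a simple $W[\mathfrak g]$-module generated by $1/d_X$. This is the combinatorial heart of the theorem and rests on the cocircuit analysis of \cite{dp1}: the ideal $I_X$ of Definition~\ref{IIX} cuts out a scheme of length $d(X)$ at the origin, providing a finite basis of the top polar piece compatible with the $W[\mathfrak g]$-action and precluding any proper submodule from missing the generator. Once the base case is secured, the coordinate-splitting reduction outlined above completes the proof.
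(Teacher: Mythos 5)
First, a point of reference: the paper does not prove Theorem \ref{ciso} at all --- it is imported verbatim from \cite{dp1} (Theorem 8.10 there), so there is no internal proof to compare yours against. Your overall architecture (spanning, directness, isotypicity via the splitting $W[\mathfrak g]\simeq W[U]\otimes W[\underline s^\perp]$) is the right one and is consistent with the strategy of \cite{dp1}. But the proposal contains a genuine error at the very point you identify as the heart of the matter. You assert that in the full-dimensional case the top polar quotient $R_{X,s}/R_{X,s-1}$ is a \emph{simple} $W[\mathfrak g]$-module generated by $1/d_X$. This is false except when $X$ is a single basis: the paper itself records, right after the theorem, that $\dim S[\mathfrak g^*]/I_X=d(X)$ is the number of bases extracted from $X$, and correspondingly $R_{X,s}/R_{X,s-1}$ has length $d(X)$ --- it is \emph{isotypic}, a direct sum of $d(X)$ copies of the unique irreducible $W[\mathfrak g]$-module supported at the origin (the Fourier transform of $S[\mathfrak g]$). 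A module of length $d(X)>1$ has plenty of proper nonzero submodules, so your remark that the length-$d(X)$ scheme ``precludes any proper submodule from missing the generator'' can at best give cyclicity, never simplicity. What your tensor argument actually needs is the correct base case --- cyclicity plus isotypicity of type $N_0$ --- after which $\bar L_{\underline s}$ is isotypic of type $N_0\otimes S[V]$, irreducible as a tensor of an irreducible with the simple module $S[V]$. Establishing that base case (e.g.\ by showing the support, or characteristic variety, of the top quotient is the origin, whence every composition factor is $N_0$) is precisely the content you have not supplied.

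The directness step also has a flaw as written. You claim that for $\underline s'\neq\underline s$ the element $m_{\underline s'}$ is ``regular generically on $\underline s^\perp$'' because some $a\in X\cap\underline s'$ lies outside $\underline s$. That one hyperplane $a^\perp$ fails to contain $\underline s^\perp$ does not make $m_{\underline s'}$ regular there: $m_{\underline s'}$ may well have poles along $b^\perp\supseteq\underline s^\perp$ for $b\in X\cap\underline s'\cap\underline s$ (take $\mathfrak g^*=\mathbb R^3$, $\underline s=\langle e_1,e_2\rangle$, $\underline s'=\langle e_1,e_3\rangle$, with $e_1\in X$). The correct statement is that, localized at the generic point of $\underline s^\perp$, the denominators occurring in $L_{\underline s'}$ involve only elements of $X\cap\underline s'\cap\underline s$, which span a subspace of dimension $<k$, so that $L_{\underline s'}$ lands in the localization of $R_{X,k-1}$; i.e.\ the comparison must be of top-order polar parts along $\underline s^\perp$, not of regularity. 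This is fixable (it is essentially the partial-fraction/residue mechanism of \cite{dp1}), but as stated the justification is incorrect, and together with the false simplicity claim it leaves the proof with real gaps rather than mere omissions of routine detail.
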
\smallskip

Consider the rank 1 free  $S[\mathfrak g^*]$ submodule $L:= d_X^{-1}S[\mathfrak g^*]$  in $R_X$ generated by $d_X^{-1}$. Set   $L_k:=L\cap R_{X,k}$, that is the intersection of $L$ with the $k-$filtration.
We obtain     for each $k$ an ideal $I_k$ of $S[\mathfrak g^*]$  defined by $$I_k:=L_kd_X.$$

 For a given rational subspace  $\underline s$  of dimension $k$, denote by  $I_{\underline s}:=S[\mathfrak g^*] d_{ X\setminus \underline s}$ the principal ideal generated by $d_{ X\setminus \underline s}$. Notice that
$$I_{\underline s}L= d_{ X\cap \underline s}^{-1}S[\mathfrak g^*]\subset L_k .  $$
Thus $I_{\underline s}\subset I_k$
 and indeed from Theorem 11.29 of   \cite{dp1} one gets  $$I_k=\sum_{\underline s\in\mathcal S_X(k)}I_{\underline s}.$$ If  $Q\subset \mathcal S_X$ is a set of rational subspaces, we set $$I_{Q}=\sum_{\underline s\in Q}I_{\underline s}$$  for the ideal generated by the elements $d_{ X\setminus \underline s}$ for $\underline s\in Q$.

Associated to $\underline s$,  we also consider  the list $ X\cap \underline s$   consisting of those elements of $X$ lying in $\underline s$ and we may consider the ideal $I_{X\cap {\underline s}}\subset S[\underline s]$, as defined in \ref{IIX}, and its extension $J_{X\cap {\underline s}}:=I_{X\cap {\underline s}}S[\mathfrak g^*]$. Since $S[\mathfrak g^*]$ is a free $S[\underline s]$ module, the obvious map
$$S[\mathfrak g^*]\otimes_{S[\underline s]}I_{X\cap {\underline s}}\to J_{X\cap {\underline s}}$$
is an isomorphism and \begin{equation}\label{firstiso}S[\mathfrak g^*]/J_{X\cap {\underline s}}\simeq S[\mathfrak g^*]\otimes_{S[\underline s]}(S[\underline s]/I_{X\cap {\underline s}}).\end{equation}

%$$\left(\begin{array}{cc}34 & 23 \\11 & 0\end{array}\right)$$

 \begin{lemma}
If $\underline s$  is of  dimension $k$, we have that  $d_{X\cap {\underline s}}^{-1}S[\mathfrak g^*]\subset L_k$ and   \begin{equation}
\label{unin}d_{X\cap {\underline s}}^{-1}S[\mathfrak g^*]\cap L_{k-1}\supset d_{X\cap {\underline s}}^{-1}J_{X\cap {\underline s}}.
\end{equation}
\end{lemma}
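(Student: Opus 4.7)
The lemma is really an unravelling of the definitions of $L_k$, $J_{X\cap\underline s}$, and the polar filtration $R_{X,k}$, combined with the single identity $d_X = d_{X\cap\underline s}\, d_{X\setminus\underline s}$. My strategy is to handle the two assertions in order, writing every element one meets as a fraction of the form $p/d_X$ and keeping track of which rational subspace its denominator spans.

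\textbf{First inclusion $d_{X\cap\underline s}^{-1}S[\mathfrak g^*]\subset L_k$.} I would first note that for any $p\in S[\mathfrak g^*]$ one can rewrite
\[
\frac{p}{d_{X\cap\underline s}} \;=\; \frac{p\, d_{X\setminus\underline s}}{d_X},
\]
which exhibits the fraction as an element of $L=d_X^{-1}S[\mathfrak g^*]$. Simultaneously the denominator on the left is the product of the elements of $X\cap\underline s$, which span $\underline s$, a rational subspace of dimension $k$, so the fraction lies in $R_{X,k}$ by definition of the polar filtration. Hence it lies in $L\cap R_{X,k}=L_k$, which is all that is needed.

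\textbf{Second inclusion $d_{X\cap\underline s}^{-1}J_{X\cap\underline s}\subset L_{k-1}$.} By the very definition of $I_{X\cap\underline s}$ (Definition \ref{IIX} applied inside $\underline s$), the ideal $J_{X\cap\underline s}=I_{X\cap\underline s}S[\mathfrak g^*]$ is generated, as an $S[\mathfrak g^*]$–module, by the elements $d_Y$, where $Y=(X\cap\underline s)\setminus H$ runs over the cocircuits of $X\cap\underline s$ in $\underline s$, i.e.\ $H$ runs over the rational hyperplanes of $\underline s$. It therefore suffices to check that each generator $d_Y/d_{X\cap\underline s}$ lies in $L_{k-1}$. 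Using the factorisation
\[
d_{X\cap\underline s} \;=\; d_{(X\cap\underline s)\cap H}\, d_{(X\cap\underline s)\setminus H} \;=\; d_{X\cap H}\, d_Y
\]
(where the equality $(X\cap\underline s)\cap H = X\cap H$ uses $H\subset\underline s$), we get
\[
\frac{d_Y}{d_{X\cap\underline s}} \;=\; \frac{1}{d_{X\cap H}}.
\]
Now $H$ is rational of dimension $k-1$ (both in $\underline s$ and hence in $\mathfrak g^*$), so this fraction's denominator is a product of elements of $X$ spanning a rational subspace of dimension $k-1$; hence it sits in $R_{X,k-1}$. Arguing as in the first part it also lies in $L$, so it lies in $L_{k-1}$, which finishes the proof.

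\textbf{Main obstacle.} There is no serious obstacle: the whole content is the bookkeeping identity $d_Y/d_{X\cap\underline s}=1/d_{X\cap H}$ together with the observation that a rational hyperplane of $\underline s$ is automatically rational in $\mathfrak g^*$. The only point that deserves care is making sure that ``cocircuit'' in $X\cap\underline s$ is interpreted inside $\underline s$ (so that ``hyperplane'' means codimension one in $\underline s$, not in $\mathfrak g^*$); once this is pinned down, the dimension drop from $k$ to $k-1$ is automatic and both assertions fall out immediately.
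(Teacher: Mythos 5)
Your proof is correct and follows essentially the same route as the paper: the first inclusion is the observation that $p/d_{X\cap\underline s}=p\,d_{X\setminus\underline s}/d_X$ has denominator spanning the $k$-dimensional rational space $\underline s$, and the second is the cancellation $d_Z/d_{X\cap\underline s}=1/d_{X\cap H}$ for a cocircuit $Z=(X\cap\underline s)\setminus H$, whose denominator spans the $(k-1)$-dimensional rational space $H$. The only difference is cosmetic (the paper writes the complement $(X\cap\underline s)\setminus Z$ rather than $X\cap H$, which is the same list).
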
\begin{proof}
We have already remarked the first statement. As for the second, by definition $J_{X\cap {\underline s}}$ is the ideal generated by the elements  $d_Z$  where $Z$ is a cocircuit in $X\cap {\underline s}$.   This means   that $ Z$ is contained in $X\cap {\underline s}$  and that $Y:=(X\cap {\underline s})\setminus  Z$ spans a subspace of dimension $k-1$.  Hence $d_{X\cap {\underline s}}^{-1}d_ZS[\mathfrak g^*]=d_Y^{-1}S[\mathfrak g^*]\subset L_{k-1}.$
\end{proof} Multiplying  Formula \eqref{unin}  by $d_X$, we deduce that
 \begin{equation}\label{ladina} I_{   \underline s} \cap I_{k-1}\supset J_{X\cap {  \underline s}}d_{ X\setminus \underline s} =\sum_{\underline t\subset \underline s, \ \underline t\in \mathcal S_X(k-1)}I_{ \underline t}.\end{equation}

   In this way, multiplication by $d_{X\cap {\underline s}}^{-1}$ gives   an homomorphism of $S[\mathfrak g^*]$--modules      $j_{\underline s}: S[\mathfrak g^*]/J_{X\cap {\underline s}}\to L_k/L_{k-1}$ and hence, taking direct sums, a homomorphism $ j:=\oplus_{\underline s\in \mathcal S_X(k)}j_{\underline s}$
   \begin{equation}\label{loiso}j : \oplus_{\underline s\in \mathcal S_X(k)}S[\mathfrak g^*]/J_{X\cap {\underline s}}\to L_k/L_{k-1}.
   \end{equation}

We have (Theorem 11.3.15 of  \cite{dp1}):
\begin{theorem}\label{filtr}  The homomorphism $j$ is an isomorphism.
\end{theorem}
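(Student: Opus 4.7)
The plan is to check well-definedness (which \eqref{unin} already gives), surjectivity, directness of the source sum, and injectivity of each $j_{\underline s}$. For surjectivity, I would invoke the identity $I_k=\sum_{\underline s\in \mathcal S_X(k)}I_{\underline s}$ (Theorem 11.29 of \cite{dp1}), already cited above: dividing by $d_X$ gives $L_k=\sum_{\underline s} d_{X\cap \underline s}^{-1}S[\mathfrak g^*]$, so every class in $L_k/L_{k-1}$ is hit by some $\sum_{\underline s} j_{\underline s}(g_{\underline s})$.

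For directness of the source sum, the key tool is the $W[\mathfrak g]$-isotypic decomposition of Theorem \ref{ciso}. Since multiplication by $g\in S[\mathfrak g^*]$ is an element of $W[\mathfrak g]$, the representative $g/d_{X\cap \underline s}=g\cdot(1/d_{X\cap \underline s})$ lies in the $W[\mathfrak g]$-submodule generated by $1/d_{X\cap \underline s}$, which by Theorem \ref{ciso} is contained in the $\underline s$-isotypic component $M_{\underline s}$ of $R_{X,k}/R_{X,k-1}$. Because the $M_{\underline s}$ are in $W[\mathfrak g]$-direct sum (a fortiori in $S[\mathfrak g^*]$-direct sum) and $L_k/L_{k-1}$ embeds in $R_{X,k}/R_{X,k-1}$ (as $L_{k-1}=L_k\cap R_{X,k-1}$), any vanishing relation $\sum j_{\underline s}(\bar g_{\underline s})=0$ splits componentwise. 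This reduces injectivity of $j$ to injectivity of each individual $j_{\underline s}$, i.e.\ to the reverse of inclusion \eqref{unin}.

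The hard part will be this reverse inclusion: showing that $g/d_{X\cap \underline s}\in L_{k-1}$ forces $g\in J_{X\cap \underline s}$. My approach is to choose a splitting $\mathfrak g^*=\underline s\oplus \underline s'$, use the identification \eqref{firstiso} to write $S[\mathfrak g^*]/J_{X\cap \underline s}\simeq S[\underline s']\otimes(S[\underline s]/I_{X\cap \underline s})$, and then invoke Theorem 11.3.15 of \cite{dp1} applied to the sublist $X\cap \underline s$ in $\underline s$; that statement identifies the top polar piece of $S[\underline s][d_{X\cap \underline s}^{-1}]$ with $S[\underline s]/I_{X\cap \underline s}$ via $g\mapsto g/d_{X\cap \underline s}$, and base change to $S[\mathfrak g^*]$ delivers the desired injectivity. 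The technical obstacle is that elements of $L_{k-1}$ may have denominators supported on rational subspaces $\underline t\not\subset \underline s$ of dimension $k-1$; these contributions cannot affect the $\underline s$-isotypic component $M_{\underline s}$ (and it is precisely in $M_{\underline s}$ that the argument naturally lives), but separating them cleanly is the delicate point, handled either by projecting onto $M_{\underline s}$ using Theorem \ref{ciso} or by a residue computation along the generic point of $\underline s$.
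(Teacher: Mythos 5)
First, a point of reference: the paper does not actually prove Theorem \ref{filtr}; it imports it wholesale as Theorem 11.3.15 of \cite{dp1}. So your proposal is measured against the argument in that book, not against anything in this text. Your skeleton is the right one, and two of its three pieces are sound: surjectivity does follow from $I_k=\sum_{\underline s\in\mathcal S_X(k)}I_{\underline s}$ (which the paper itself quotes from \cite{dp1} before stating the theorem), and the reduction of injectivity of $j$ to injectivity of each $j_{\underline s}$ via the isotypic decomposition of Theorem \ref{ciso} is correct: $g/d_{X\cap\underline s}$ lies in the $W[\mathfrak g]$-submodule of $R_{X,k}/R_{X,k-1}$ generated by $1/d_{X\cap\underline s}$, the isotypic components are in direct sum, and $L_k/L_{k-1}$ injects into $R_{X,k}/R_{X,k-1}$ since $L_{k-1}=L\cap R_{X,k-1}$.

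The gap is the injectivity of the individual $j_{\underline s}$, i.e.\ equality in \eqref{unin}: that $g/d_{X\cap\underline s}\in R_{X,k-1}$ forces $g\in J_{X\cap\underline s}$. This is the entire content of the theorem, and your treatment of it is circular: the result you invoke for the ``top polar piece'' of $S[\underline s][d_{X\cap\underline s}^{-1}]$ is Theorem 11.3.15 of \cite{dp1} --- the very statement under proof, in the special case $k=\dim\underline s$. Even granting an induction on $\dim\mathfrak g^*$, that induction does not close, because for $k=s$ the only $\underline s$ is $\mathfrak g^*$ itself and no reduction to a smaller list is available; the top case needs an independent argument (in \cite{dp1} it comes from the explicit unbroken-basis expansion of $R_X$, equivalently from the duality $S[\mathfrak g^*]/I_X\cong D(X)^*$ plus the dimension count $\dim D(X)=d(X)$). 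Moreover, of your two proposed fixes for the ``technical obstacle'' of denominators $d_Y$ with $\langle Y\rangle\not\subset\underline s$, the first cannot work: once $g/d_{X\cap\underline s}\in R_{X,k-1}$, its class in $R_{X,k}/R_{X,k-1}$ is zero, so projecting onto the isotypic component $M_{\underline s}$ yields no information --- Theorem \ref{ciso} lives in the quotient and cannot see elements of $R_{X,k-1}$. The second fix (localizing at the generic point of $\underline s^{\perp}$ so that every $a\in X\setminus\underline s$ becomes a unit, which confines all denominators to sublists of $X\cap\underline s$ spanning proper subspaces of $\underline s$, followed by faithfully flat descent to $S[\mathfrak g^*]$) is the right idea and is essentially how one reduces to the top case for $X\cap\underline s$; but as written it is a gesture, and it still leaves the top case itself unproved.
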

Using (\ref{ladina}),  Theorem \ref{filtr} tells us that the summation morphism
 \begin{equation}\label{loiso2}\tilde j:\oplus_{\underline s\in \mathcal S_X(k)}I_{\underline s}/J_{X_{  \underline s}}d_{ X\setminus \underline s}\to I_k/I_{k-1}  \end{equation}
is an isomorphism.

\begin{definition} A set $Q\subset \mathcal S_X$ of rational subspaces is called admissible if, for every $\underline s\in Q$,  $Q$ also contains all rational subspaces $\underline t\subset\underline s$.
\end{definition}

 From Theorem \ref{filtr},  we deduce
\begin{proposition}\label{lamain} 1) For any subset $\mathcal G\subset \mathcal S_X(k)$
\begin{equation}(\sum_{\underline s\in\mathcal G}I_{\underline s})\cap I_{k-1}=\sum_{\underline t\subset \underline s\in\mathcal G, \ \underline t\in \mathcal S_X(k-1)}I_{ \underline t}.\end{equation}

2) Given an admissible subset $Q\subset \mathcal S_X $ and a rational subspace  $\underline s\in  Q$ of maximal  dimension $k$,  then
\begin{equation}\label{laint}
I_{  \underline s} \cap I_{Q\setminus\{\underline s\}} =I_{  \underline s}\cap I_{k-1}=\sum_{\underline t\subset \underline s, \ \underline t\in \mathcal S_X(k-1)}I_{ \underline t}.
\end{equation}
\end{proposition}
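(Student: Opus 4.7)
My plan is to deduce both assertions from Theorem \ref{filtr} in its equivalent form \eqref{loiso2}: the summation map
\[\tilde j:\bigoplus_{\underline u\in \mathcal S_X(k)} I_{\underline u}/J_{X\cap \underline u}d_{X\setminus \underline u}\longrightarrow I_k/I_{k-1}\]
is an isomorphism. The strategy in each part is to reduce an intersection identity inside $I_k$ to a vanishing statement in $I_k/I_{k-1}$ and then exploit directness of the $\tilde j$-decomposition. Throughout I will use the elementary observation that $\underline t\subset\underline s$ forces $X\setminus\underline s\subset X\setminus\underline t$, hence $d_{X\setminus\underline s}$ divides $d_{X\setminus\underline t}$ and $I_{\underline t}\subset I_{\underline s}$.

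For part 1), the inclusion $\supset$ is immediate: each $I_{\underline t}$ with $\underline t\subset\underline s\in\mathcal G$ and $\underline t\in\mathcal S_X(k-1)$ sits inside $I_{\underline s}\cap I_{k-1}$. For $\subset$, I take $f\in (\sum_{\underline s\in\mathcal G}I_{\underline s})\cap I_{k-1}$ and write $f=\sum_{\underline s\in\mathcal G}f_{\underline s}$ with $f_{\underline s}\in I_{\underline s}$. Since $f\in I_{k-1}$, its image in $I_k/I_{k-1}$ vanishes; read through $\tilde j^{-1}$ this image is $\sum_{\underline s\in\mathcal G}\overline{f_{\underline s}}$, with each term sitting in its own $\underline s$-slot of the direct sum. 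Directness then forces $\overline{f_{\underline s}}=0$ for every $\underline s\in\mathcal G$, that is $f_{\underline s}\in J_{X\cap\underline s}d_{X\setminus\underline s}$, which by \eqref{ladina} equals $\sum_{\underline t\subset\underline s,\ \underline t\in\mathcal S_X(k-1)}I_{\underline t}$; summing over $\underline s\in\mathcal G$ gives the claim.

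For part 2), the second equality is part 1) with $\mathcal G=\{\underline s\}$. For the first equality, the inclusion $\supset$ uses admissibility of $Q$: every $\underline t\subset\underline s$ of dimension $k-1$ belongs to $Q\setminus\{\underline s\}$, so $I_{\underline t}\subset I_{\underline s}\cap I_{Q\setminus\{\underline s\}}$, and summing yields $I_{\underline s}\cap I_{k-1}\subset I_{\underline s}\cap I_{Q\setminus\{\underline s\}}$. For $\subset$, I split $I_{Q\setminus\{\underline s\}}$ into contributions from $\underline t$ with $\dim\underline t<k$ (already contained in $I_{k-1}$) and those with $\dim\underline t=k$, $\underline t\neq\underline s$ (the only remaining possibility, since $\underline s$ has maximal dimension in $Q$). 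The image of $I_{Q\setminus\{\underline s\}}$ in $I_k/I_{k-1}$ therefore lies in the span of the $\underline u$-summands for $\underline u\in Q\cap\mathcal S_X(k)$, $\underline u\neq\underline s$. If $f\in I_{\underline s}\cap I_{Q\setminus\{\underline s\}}$, its image in $I_k/I_{k-1}$ must lie both in the $\underline s$-summand and in these complementary summands, so directness forces it to vanish; hence $f\in I_{k-1}$. The only delicate point is this separation step in part 2); beyond Theorem \ref{filtr}, \eqref{ladina} and the admissibility hypothesis, no additional ingredient is needed.
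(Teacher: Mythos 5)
Your proof is correct and follows essentially the same route as the paper: both parts rest on the injectivity of the summation map $\tilde j$ from \eqref{loiso2} together with the identification \eqref{ladina} of $J_{X\cap\underline s}d_{X\setminus\underline s}$ with $\sum_{\underline t\subset\underline s}I_{\underline t}$, and the reduction of each intersection statement to a directness argument in $I_k/I_{k-1}$. The only difference is cosmetic: in part 2) you treat general admissible $Q$ in one stroke, whereas the paper first reduces to the case $Q\supset\mathcal S_X(k-1)$ and then passes to $\tilde Q=Q\cup\mathcal S_X(k-1)$; your uniform version is slightly cleaner but not a different argument.
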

\begin{proof} 1)  By (\ref{loiso2}), the restriction of $\tilde j$ to $ \oplus_{\underline s\in \mathcal G}I_{\underline s}/J_{X_{  \underline s}}d_{ X\setminus \underline s}$ is injective. It follows that
$$(\sum_{\underline s\in \mathcal G}I_{\underline s})\cap I_{k-1}=\sum_{\underline s\in \mathcal G}J_{X_{  \underline s}}d_{ X\setminus \underline s}=\sum_{\underline t\subset \underline s\in\mathcal G, \ \underline t\in \mathcal S_X(k-1)}I_{ \underline t}$$
as desired.

2) We first assume that $Q\supset \mathcal S_X(k-1)$ so that   $Q\setminus \underline s=\mathcal S_X(k-1)\cup \mathcal G$ with $\mathcal G\subset \mathcal S_X(k)$. If $\mathcal G$  is empty,  then $I_{Q_{\setminus \{ \underline s\}}}=I_{k-1}$ and our claim is a special case of 1).

Otherwise
$I_{Q_{\setminus \underline s}}=I_{k-1}+ \sum_{\underline t\in\mathcal G}I_{\underline t}.$
Let $b\in  I_{  \underline s} \cap I_{Q_{\setminus \underline s}}$. Passing modulo $I_{k-1}$,  we get an element lying in $I_{\underline s}/(I_{\underline s}\cap I_{k-1})$ and in $(\sum_{\underline t\in\mathcal G}
I_{\underline t})/((\sum_{\underline t\in\mathcal G}I_{\underline t})
\cap I_{k-1})$.
 But the restriction of $\tilde j$ to $\oplus_{\underline t\in \mathcal \mathcal G\cup\{\underline s\}}I_{\underline t}/I_{\underline t}\cap I_{k-1}$ is injective. It follows that $b\in I_{k-1}$ as desired.

Passing to the general case, set $\tilde Q=Q\cup\mathcal S_X(k-1)$. We have
$$I_{  \underline s} \cap I_{Q\setminus\{\underline s\}} \subset I_{  \underline s} \cap I_{\tilde Q\setminus\{\underline s\}} =I_{  \underline s}\cap I_{k-1}=\sum_{\underline t\subset \underline s, \ \underline t\in \mathcal S_X(k-1)}I_{ \underline t}.$$
On the other hand it is clear that
$$I_{  \underline s} \cap I_{Q\setminus\{\underline s\}} \supset \sum_{\underline t\subset \underline s, \ \underline t\in \mathcal S_X(k-1)}I_{ \underline t}$$
and our claim follows.
\end{proof}

\section{Equivariant cohomology}
\subsection{Equivariant cohomology of $M_{X,\geq k}$}

Let $G$ be a compact torus.
Given a $G$ space $M$, we denote   for simplicity by   $H^*_G(M)$ the $G$ equivariant cohomology $H^*_G(M,\mathbb R)$ of $M$  with real coefficients.

For a character $a\in\Lambda$, we denote by $L_a$ the one dimensional complex $G$ module on which $G$ acts via  $a$. Given a list $X$ in $\Lambda$, we set $$M_X=\oplus_{a\in X}L_a.$$

Our purpose is to compute the equivariant cohomology of various $G$ stable open sets in $M_X$.

To begin with, since $M_X $ is a vector space, $H^*_G(M_X  )$ equals the equivariant cohomology of a point and thus  $H^*_G(M_X )=S[\mathfrak g^*]$, and    $\mathfrak g^*=H_{G }^2(M_X )$.

Let $X$  and $M_X$ be as before and $Y$ a sublist of $X$.  We have $M_Y \subset M_X$.
\begin{lemma}\label{facil}
$H^*_G(M_X\setminus M_Y)=S[\mathfrak g^*]/(d_{X\setminus Y}).$
\end{lemma}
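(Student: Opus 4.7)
The plan is to reduce to the case $Y=\emptyset$ by a homotopy argument, and then use the equivariant Thom--Gysin long exact sequence for the inclusion of the origin.

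First, I would use the $G$-equivariant decomposition $M_X = M_Y \oplus M_{X\setminus Y}$, under which
\[
M_X \setminus M_Y \;=\; M_Y \times \bigl(M_{X\setminus Y}\setminus\{0\}\bigr).
\]
The factor $M_Y$ is a linear $G$-representation, hence $G$-equivariantly contractible to the origin via the scalar retraction $(t,v)\mapsto tv$. Therefore projection onto the second factor induces an isomorphism
\[
H^*_G(M_X\setminus M_Y)\;\cong\; H^*_G\bigl(M_{X\setminus Y}\setminus\{0\}\bigr),
\]
and it remains to show $H^*_G(M_Z\setminus\{0\})=S[\mathfrak{g}^*]/(d_Z)$ for $Z=X\setminus Y$.

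Next, I would apply the equivariant long exact sequence of the pair $(M_Z,M_Z\setminus\{0\})$, identifying $H^*_G(M_Z,M_Z\setminus\{0\})$ with $H^{*-2|Z|}_G(\{0\})=S[\mathfrak{g}^*]$ via the equivariant Thom isomorphism (the origin is a closed $G$-stable submanifold whose equivariant normal bundle is the $G$-module $M_Z=\bigoplus_{a\in Z}L_a$). Under this identification the map $H^*_G(M_Z,M_Z\setminus\{0\})\to H^*_G(M_Z)=S[\mathfrak{g}^*]$ becomes multiplication by the equivariant Euler class of $M_Z$, which for a sum of equivariant line bundles $L_a$ is the product of the characters $a\in H^2_G(\mathrm{pt})$; that is, multiplication by $d_Z=\prod_{a\in Z}a$.

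The key point making the sequence collapse is that $d_Z$ is a non-zero-divisor in the polynomial ring $S[\mathfrak{g}^*]$ (a product of nonzero linear forms in a domain). Hence the connecting maps in the Gysin sequence all vanish and it breaks into short exact sequences
\[
0\;\to\;S[\mathfrak{g}^*]\;\xrightarrow{\,\cdot\, d_Z\,}\;S[\mathfrak{g}^*]\;\to\;H^*_G\bigl(M_Z\setminus\{0\}\bigr)\;\to\;0,
\]
yielding the claimed identification. The only mild obstacle is checking the equivariant Euler-class computation and the injectivity of multiplication by $d_Z$; both are standard, so the argument is essentially formal once the product decomposition reduces us to the punctured representation.
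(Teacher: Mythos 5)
Your argument is correct and follows the same route as the paper: the product decomposition $M_X\setminus M_Y=(M_{X\setminus Y}\setminus 0)\times M_Y$ together with equivariant contractibility of $M_Y$, then the long exact sequence of the pair with the equivariant Euler class $d_{X\setminus Y}$. Your explicit observation that $d_{X\setminus Y}$ is a non-zero-divisor (so the Gysin sequence collapses) is a detail the paper leaves implicit, but the proofs are essentially identical.
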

\begin{proof}  Since $M_X\setminus M_Y=(M_{X\setminus Y}\setminus 0)\times M_Y$, we have $H^*_G(M_X\setminus M_Y)\cong  H^*_G(M_{X\setminus Y}\setminus 0).$ Moreover, the long exact sequence  of the pair  $(M_{X\setminus Y}, 0 )$ and the definition of the equivariant Euler class yield  $ H^*_G(M_{X\setminus Y}\setminus 0)\cong H^*_G(M_{X\setminus Y})/(d_{X\setminus Y})$.

%
%Write $M_X=M_{X\setminus Y}\oplus M_Y$ and denote by $\pi:M_X\to M_Y$ the projection onto the second factor. This is a $G$ equivariant vector bundle on $M_Y$ with fiber $M_{X\setminus Y}$. Thus its equivariant Euler class in $H_{G }^*(M_Y)=S[\mathfrak g^*]$ is given by $d_{X\setminus Y}$. The space $M_X\setminus M_Y$ is obtained by  removing the zero section of $\pi$. It is   a standard fact that
%$H^*_G(M_X\setminus M_Y)$ equals the equivariant cohomology of $M_Y$ modulo the ideal generated by the Euler class, that is $S[\mathfrak g^*]/(d_{X\setminus Y}).$
\end{proof}
Take a subset $Q\subset \mathcal S_X$ of rational subspaces and set
$$\mathcal A_Q=M_X\setminus \cup_{\underline s\in Q}M_{X\cap {\underline s}}.$$
\begin{theorem}\label{ilpr}
   $H^*_G(\mathcal A_Q)$ is isomorphic as a graded ring to
$S[\mathfrak g^*]/I_Q$.

 In particular $\mathcal A_Q$ has no $G$ equivariant odd cohomology.
\end{theorem}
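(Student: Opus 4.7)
The plan is to induct on $|Q|$ after first reducing to the case where $Q$ is admissible. The reduction is harmless: adding to $Q$ any rational subspace $\underline{t}$ contained in some $\underline{s}\in Q$ changes neither $\mathcal{A}_Q$ (clear) nor $I_Q$, because $d_{X\setminus\underline{s}}\mid d_{X\setminus\underline{t}}$ gives $I_{\underline{t}}\subseteq I_{\underline{s}}$. The base $Q=\emptyset$ is just $H^*_G(M_X)=S[\mathfrak{g}^*]$, and $|Q|=1$ is Lemma \ref{facil}. For the inductive step pick $\underline{s}\in Q$ of maximal dimension $k$, set $Q' := Q\setminus\{\underline{s}\}$ (still admissible), and $Q'' := \{\underline{u}\in Q : \underline{u}\subsetneq\underline{s}\}$. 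The closed complement of $\mathcal{A}_Q$ in $\mathcal{A}_{Q'}$ is $C := M_{X\cap\underline{s}}\cap\mathcal{A}_{Q'}$; using $M_{X\cap\underline{t}}\cap M_{X\cap\underline{s}}=M_{X\cap(\underline{t}\cap\underline{s})}$, maximality of $\underline{s}$, and the fact that the rational span of $\underline{t}\cap\underline{s}$ (for $\underline{t}\in Q'$) lies in $Q''$ by admissibility, I identify $C$ with the analogue of $\mathcal{A}$ inside the smaller $G$-representation $M_{X\cap\underline{s}}$ for the list $X\cap\underline{s}$ and the admissible set $Q''$. Since $|Q''|<|Q|$, induction yields $H^*_G(C)=S[\mathfrak{g}^*]/I_{Q''}'$ (even-concentrated), with $I_{Q''}' := \sum_{\underline{u}\in Q''}(d_{(X\cap\underline{s})\setminus\underline{u}})$. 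The point that $X\cap\underline{s}$ may not span $\mathfrak{g}^*$ is absorbed by writing $H^*_G=H^*_{G/G'}\otimes S[(\mathfrak{g}')^*]$ for the appropriate subtorus.

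Because $C$ is a complex submanifold of $\mathcal{A}_{Q'}$ of complex codimension $d=|X\setminus\underline{s}|$ with trivial $G$-equivariant normal bundle of Euler class $d_{X\setminus\underline{s}}$, Thom's isomorphism converts the long exact sequence of the pair $(\mathcal{A}_{Q'},\mathcal{A}_Q)$ into
\begin{equation*}
\cdots\to H^{*-2d}_G(C)\xrightarrow{i_*}H^*_G(\mathcal{A}_{Q'})\xrightarrow{j^*}H^*_G(\mathcal{A}_Q)\to H^{*+1-2d}_G(C)\to\cdots,
\end{equation*}
with $i_*$ the Gysin pushforward. A short divisibility check gives $I_{Q'}\subseteq I_{Q''}'$, so $i^*:S[\mathfrak{g}^*]/I_{Q'}\to S[\mathfrak{g}^*]/I_{Q''}'$ is the canonical surjection, and the projection formula yields $i_*(\alpha)=\tilde{\alpha}\cdot d_{X\setminus\underline{s}}$ for any lift $\tilde{\alpha}$.

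The crux is injectivity of $i_*$, equivalent to the colon-ideal equality $(I_{Q'}:d_{X\setminus\underline{s}})=I_{Q''}'$, and this is where Proposition \ref{lamain}(2) enters: it gives $I_{\underline{s}}\cap I_{Q'}=\sum_{\underline{t}\in\mathcal{S}_X(k-1),\,\underline{t}\subset\underline{s}}I_{\underline{t}}$. Since $(d_{X\setminus\underline{s}})\cap I_{Q'}=d_{X\setminus\underline{s}}\cdot(I_{Q'}:d_{X\setminus\underline{s}})$ in the domain $S[\mathfrak{g}^*]$, and $d_{X\setminus\underline{t}}=d_{X\setminus\underline{s}}\cdot d_{(X\cap\underline{s})\setminus\underline{t}}$ for $\underline{t}\subsetneq\underline{s}$ of codimension one, cancellation identifies $(I_{Q'}:d_{X\setminus\underline{s}})$ with $J_{X\cap\underline{s}}$, which equals $I_{Q''}'$ (the codimension-one generators suffice by divisibility). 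With $i_*$ injective and the flanking terms concentrated in even degree, the long exact sequence splits into short exact sequences in even degree, so $\mathcal{A}_Q$ has no odd cohomology and, since $j^*$ is a ring map, $H^*_G(\mathcal{A}_Q)\cong H^*_G(\mathcal{A}_{Q'})/\mathrm{image}(i_*) = (S[\mathfrak{g}^*]/I_{Q'})/((I_{\underline{s}}+I_{Q'})/I_{Q'}) = S[\mathfrak{g}^*]/I_Q$ as graded rings. The main obstacle is precisely the colon-ideal identity, which relies on the results of \cite{dp1} encoded in Proposition \ref{lamain}; everything else is standard equivariant topology.
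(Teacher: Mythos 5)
Your proof is correct, and it takes a genuinely different topological route from the paper's while resting on the same algebraic pillar. The paper covers $\mathcal A_{\mathcal S_{<\underline s}}$ by the two open sets $\mathcal A_{Q\setminus\{\underline s\}}$ and $M_X\setminus M_{X\cap\underline s}$, whose intersection is $\mathcal A_Q$, and runs Mayer--Vietoris; the key point there is injectivity of the restriction $\psi$ to the two pieces, which is exactly the intersection formula $I_{\underline s}\cap I_{Q\setminus\{\underline s\}}=I_{\mathcal S_{<\underline s}}$ of Proposition \ref{lamain}(2). You instead excise the closed stratum $C=M_{X\cap\underline s}\cap\mathcal A_{Q\setminus\{\underline s\}}$ and run the Gysin sequence; your key point, injectivity of $i_*$, is the colon-ideal identity $(I_{Q\setminus\{\underline s\}}:d_{X\setminus\underline s})=J_{X\cap\underline s}$, which is the \emph{same} formula of Proposition \ref{lamain}(2) after cancelling the factor $d_{X\setminus\underline s}$ (legitimate, as $S[\mathfrak g^*]$ is a domain). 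The trade-offs: the paper's induction never leaves the fixed representation $M_X$ and needs only Lemma \ref{facil} and no Thom classes, whereas yours requires the theorem for the smaller lists $X\cap\underline s$ (hence dropping the spanning hypothesis, which you correctly absorb via the quotient torus) and two standard verifications you leave implicit --- that $i_*(1)$ equals the class of $d_{X\setminus\underline s}$ in $S[\mathfrak g^*]/I_{Q\setminus\{\underline s\}}$ itself and not merely after restriction to $C$ (this follows from naturality of the Gysin map under the open inclusion $\mathcal A_{Q\setminus\{\underline s\}}\subseteq M_X$, where the pushforward of $1$ along $M_{X\cap\underline s}\hookrightarrow M_X$ is literally the Euler class), and that all the inductive isomorphisms are induced by restriction from $H^*_G(M_X)=S[\mathfrak g^*]$ so that $i^*$ really is the canonical surjection. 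In return, your stratum-by-stratum short exact sequences $0\to H^{*-2d}_G(C)\to H^*_G(\mathcal A_{Q\setminus\{\underline s\}})\to H^*_G(\mathcal A_Q)\to 0$ are structurally parallel to the compactly supported sequences \eqref{lasecondina} of Theorem \ref{Tmenos}, so your argument makes the later part of the paper look less ad hoc.
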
 \begin{proof}  Let us add to $Q$ all the rational subspaces $\underline t$ which are contained in at least one of the elements of $Q$. In this way, we get a new subset $\overline Q\supset Q$ which is now admissible and is such that $\mathcal A_Q=\mathcal A_{\overline Q}$.
Also it is clear that  $I_Q=I_{\overline Q}$.

Having made this remark, we may without loss of generality assume that $Q$ is admissible. If $Q=\emptyset$, then $\mathcal A_\emptyset =M_X$, the ideal $I_\emptyset=\{0\}$ and there is nothing to prove. Thus we can proceed by induction on the cardinality of $Q$ and assume that $Q$ is nonempty.

Take $\{\underline s\}$ maximal in $Q$. Notice that $Q\setminus \{\underline s\}$ is also admissible. Furthermore  the set
  $$\mathcal S_{< \underline s}=\{\underline t\in\mathcal S_X\ |\ \underline t\subsetneq \underline s\}$$
  is also  admissible and strictly contained in $Q$.

We have
$$ \mathcal A_Q = \mathcal A_{Q\setminus\{\underline s\}}\cap (M_X\setminus  M_{X\cap {\underline s} })$$ and
$$\mathcal A_{Q\setminus\{\underline s\}}\cup (M_X\setminus  M_{X\cap {\underline s}} )=\mathcal A_{\mathcal S_{< \underline s}}.$$
Thus, by induction, we have \begin{equation}\label{isos}H^*_G( \mathcal A_{Q\setminus\{\underline s\}})=S[\mathfrak g^*]/I_{Q\setminus\{\underline s\}},\ \ \ H^*_G( \mathcal A_{Q\setminus\{\underline s\}}\cup (M_X\setminus  M_{X\cap {\underline s}} )=S[\mathfrak g^*]/I_{\mathcal S_{< \underline s}}.\end{equation}
Set $Y:= X\cap {\underline s}$.  Consider the homomorphism
$$\psi:H^*_G( \mathcal A_{Q\setminus\{\underline s\}}\cup (M_X\setminus  M_{X\cap {\underline s}} ))\to H^*_G( \mathcal A_{Q\setminus\{\underline s\}})\oplus H^*_G(M_X\setminus M_Y)$$
induced by inclusion.
Using the isomorphisms (\ref{isos}) and Lemma \ref{facil}, we get a commutative diagram
$$\begin{CD}H^*_G( \mathcal A_{Q\setminus\{\underline s\}}\cup (M_X\setminus  M_{X\cap {\underline s}} ))@>\psi >> H^*_G( \mathcal A_{Q\setminus\{\underline s\}})\oplus H^*_G(M_X\setminus M_Y)\\ @V\simeq VV @V\simeq VV\\ S[\mathfrak g^*]/I_{\mathcal S_{< \underline s}}@>  >>S[\mathfrak g^*]/I_{Q\setminus\{\underline s\}}\oplus S[\mathfrak g^*]/(d_{X\setminus Y}) \end{CD}$$
where the vertical arrows are isomorphisms.
Now by Proposition \ref{lamain}  2)
$$I_{  \underline s} \cap I_{Q\setminus\{\underline s\}} =I_{  \underline s}\cap I_{k-1}=\sum_{\underline t\subset \underline s, \ \underline t\in \mathcal S_X(k-1)}I_{ \underline t}=I_{\mathcal S_{< \underline s}}.$$
Thus $\psi$ is injective. We immediately deduce from the Mayer-Vietoris sequence that the homomorphism
$$\phi: H^*_G( \mathcal A_{Q\setminus\{\underline s\}})\oplus H^*_G(M_X\setminus M_Y)\to H^*_G( \mathcal A_{Q}) $$
is surjective and that $H^*_G( \mathcal A_{Q})\simeq S[\mathfrak g^*]/I_Q$ as desired.
\end{proof}
\begin{remark}
There is a parallel theorem
%, somewhat easier to prove,
for the algebraic  counterpart of equivariant cohomology, that is the equivariant Chow ring (see Edidin and Graham \cite{EG}).
\end{remark}

\subsection{Equivariant cohomology of $M_{X,\geq k}$}

Let $s:=\dim(G)$ and let us look at some special cases of Theorem \ref{ilpr}.  We always assume that $X$ spans  the  $s$-dimensional space $\mathfrak g^*$, which is equivalent to assume that the generic point of $M_X$ has a finite stabilizer.

 If $Q=\mathcal S_X(k-1)$, $$\mathcal A_{\mathcal S_X(k-1)}=M_X\setminus \cup_{\underline s\in \mathcal S_X(k-1)}M_{X\cap {\underline s}} :=M_{X,\geq k}$$ is the set of points whose orbits have dimension at least $k$.  \begin{definition}
For $k=s$,  $M_{X,\geq s}$ is  the open set of points with finite stabilizer that we also denote by $M_X^{fin}$.
\end{definition}

\begin{corollary}\label{finit}
The equivariant cohomology of  $M_{X,\geq k}$ is isomorphic as a graded algebra to $S[\mathfrak g^*]$ modulo the ideal $I_{k-1}$. In particular  $H^*_{G}(M_X^{fin})=S[\mathfrak g^*]/I_X$ with $I_X$ the ideal generated by the elements $d_Y$ as $Y$ runs over the cocircuits.

\end{corollary}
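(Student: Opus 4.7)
The plan is to deduce both assertions from Theorem \ref{ilpr} by an unwinding of definitions; no new computation is required.

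First I would apply Theorem \ref{ilpr} with $Q=\mathcal S_X(k-1)$. By the identification recorded just before the corollary, $\mathcal A_{\mathcal S_X(k-1)}=M_{X,\geq k}$, so Theorem \ref{ilpr} yields an isomorphism of graded algebras $H^*_G(M_{X,\geq k})\cong S[\mathfrak g^*]/I_{\mathcal S_X(k-1)}$. It then suffices to check that $I_{\mathcal S_X(k-1)}$ agrees with $I_{k-1}$. By the definition $I_Q=\sum_{\underline s\in Q}I_{\underline s}$, the former equals $\sum_{\underline s\in\mathcal S_X(k-1)}I_{\underline s}$, and this sum was already identified with $I_{k-1}$ via the formula $I_k=\sum_{\underline s\in\mathcal S_X(k)}I_{\underline s}$ cited (with $k$ replaced by $k-1$) from Theorem 11.29 of \cite{dp1}.

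For the second statement, I would specialize to $k=s=\dim G$, so that $M_{X,\geq s}=M_X^{fin}$ by definition. A rational subspace $\underline s$ of dimension $s-1$ is exactly a rational hyperplane $H$, and the associated ideal $I_H=S[\mathfrak g^*]\,d_{X\setminus H}$ is the principal ideal generated by the cocircuit $d_{X\setminus H}$. Summing over all rational hyperplanes recovers
\[
I_{s-1}=\sum_{H}S[\mathfrak g^*]\,d_{X\setminus H},
\]
which is by construction the ideal generated by all $d_Y$ as $Y$ runs over the cocircuits of $X$, namely $I_X$ in the sense of Definition \ref{IIX}.

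Since the corollary is a direct specialization of Theorem \ref{ilpr}, there is no genuine obstacle: all the substantive content (the Mayer--Vietoris induction and the intersection formula of Proposition \ref{lamain}) has already been carried out in the theorem. The only bookkeeping is to confirm the two identifications of ideals above, both of which are immediate from the definitions and from the description of $I_k$ as $\sum_{\underline s\in\mathcal S_X(k)}I_{\underline s}$.
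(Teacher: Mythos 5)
Your argument is correct and is exactly the intended one: the paper states this as an immediate corollary of Theorem \ref{ilpr}, obtained by taking $Q=\mathcal S_X(k-1)$, identifying $I_{\mathcal S_X(k-1)}$ with $I_{k-1}$ via $I_{k-1}=\sum_{\underline s\in\mathcal S_X(k-1)}I_{\underline s}$, and observing for $k=s$ that the rational subspaces of dimension $s-1$ are the rational hyperplanes, so $I_{s-1}$ is generated by the cocircuit products $d_{X\setminus H}$, i.e.\ equals $I_X$. Nothing is missing.
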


\begin{remark}
Assume that $X$ spans $\Lambda\subset \mathfrak g^*$ and that the cone   $C(X)\subset \mathfrak g^*$ of linear combinations of the vectors in $X$ with non negative coefficients is acute. Consider the complexified torus $G_{\mathbb C}$. The list $X$ gives an embedding $G\to (S_1)^m$ and its complexification $G_{\mathbb C}\to (\mathbb C^*)^m$, so we may consider the  torus $T=(S_1)^m/G$ and its complexification $T_{\mathbb C}= (\mathbb C^*)^m/G_{\mathbb C}$.

Write $z\in M_X$ as $z=\sum_a z_a$ with $z_a\in L_a$.
Choose $\xi\in C(X)$  not lying in any rational hyperplane (in this case we say that $\xi$ is regular).
Then the set $P_{\xi}:=\{z\in M_X\,|\, \sum_a |z_a|^2 a=\xi\}$ is smooth,  contained in $M_X^{fin}$, and $P_\xi/G$ is a toric variety for the complex torus $T_{\mathbb C}$.  As a $T_{\mathbb C}$-variety,  $P_\xi/G$ depends only on the connected component of the set of regular points containing $\xi$. Furthermore  $P_\xi/G$ is projective and rationally smooth.

Generators and relations for the ring $H_G^*(P_\xi)=H^*(P_\xi/G)$
are well known (see for example \cite{danilov}). In particular,
$S(\g^*)$ surjects on $H^*(P_\xi/G)$.
Consider the restriction map
$H_G^*(M_X^{fin})\to H^*_G(P_\xi)$.
Thus this map is surjective for any regular $\xi$ and  its kernel  (which depends upon the
connected component of the set of regular points containing $\xi$)
is  generated by the polynomials $d_{X\setminus \sigma}\in S(\g^*)$,
where $\sigma\subset X$ runs  over the bases of $\mathfrak g^*$
such that $\xi$ is not in the
cone generated by  $\sigma$.

\end{remark}

\begin{remark}
It may be interesting to observe that to $X$, as to any matroid, is associated a two variable polynomial, the Tutte polynomial,  that describes the statistics of  external and internal activity.  Then the statistic of external activity gives rise to the Betti numbers of equivariant cohomology  of  $M_{X}^{fin}$  while from internal activity one deduces the characteristic polynomial that describes Betti numbers of the  complement of the complex hyperplane arrangement deduced from $X$.  A direct topological interpretation of the Tutte polynomial has been recently obtained in \cite{FS}.
\end{remark}

\section {Equivariant cohomology of $T_G^*M$}

\subsection{The space $D(X)$\label{vet}}  In order to perform our cohomology computations, we need first to introduce some new spaces.
We keep the notation of the previous sections.

Given $a\in \mathfrak g^*$, let us denote by $\partial_a$  the  derivative in the $a$ direction. We identify $S[\mathfrak g^*]$ to the space of differential operators  with constant coefficients on $\mathfrak g^*$.

To  a cocircuit $Y$, we associate the differential operator $\partial_{Y}:=\prod_{a\in Y}\partial_a$.
\begin{definition}\label{ladedix}
The space $D(X)$  is given by
\begin{equation}\label{ilsist} D(X):=\{f\in S[\mathfrak g]\,|\, \partial_Yf=0,\ \  {\rm for\ every\ cocircuit\ }Y\}.\end{equation}
\end{definition}
The space $D(X)$ is stable by the action of  $S[\mathfrak g^*]$.

Notice that, by its definition:
\begin{remark}\label{ladua}
$D(X)$ is the (graded) vector space dual to the algebra $D^*(X)=S[\mathfrak g^*]/I_X$, that is the cohomology ring $H^*_G(M_{X}^{fin})$ by Corollary \ref{finit}.
\end{remark}  To be consistent with grading in cohomology, we double the degrees in $S[\mathfrak g]$ and hence in $D(X)$ and we set for each $i\geq 0$, $D(X)^{2i+1}=\{0\}$.

Using the Lebesgue measure associated to the lattice $\Lambda$, we will in what follows freely identify polynomial functions on $\g^*$ with polynomial densities on $\g^*$.

The polynomials in $D(X)$, dual to the algebra $D^*(X):=S[\mathfrak g^*]/I_X$,  can be naturally interpreted as Laplace--Fourier transforms  of  the finite dimensional space  $\hat D(X)$  of those generalized  functions   which  vanish on the functions vanishing on the scheme $V_X$.  \smallskip

Denote by $\mathcal S'(\mathfrak g^*)$ the space of tempered distributions on $\mathfrak g^*$.
Assume now that there is an element $x\in \mathfrak g$ such that $\langle x,a\rangle>0$ for every $a$ in $X$. Recall that the {\it multivariate spline $T_X$}   is the tempered distribution
defined by:
\begin{equation}\label{multiva}
\langle T_X\,|\,f\rangle = \int_0^\infty\dots\int_0^\infty
f(\sum_{i=1}^mt_i a_i)dt_1\dots dt_m.
\end{equation}
Its Laplace transform  is  $d_X^{-1}:=1/\prod_{a\in X}a $. Notice that, if $a\in X$, \begin{equation}\label{lecon}T_X=T_a* T_{X\setminus a},\
\partial_aT_X=T_{X\setminus a}, \implies \partial_XT_X=T_{\emptyset}=\delta_0 .
\end{equation}

Let $\underline r$ be a vector subspace in $\mathfrak g^*$. We have an embedding $j:\mathcal S'(\underline r)\to \mathcal S'(\mathfrak g^*)$ by $j(\phi)(f)=\phi(f|\underline r)$ for any $\phi\in \mathcal S'(\underline r)$, $f$ a Schwartz function on $\mathfrak g^*$. We denote the image $j(\mathcal S'(\underline r))$ by $\mathcal S'(\mathfrak g^*, \underline r) $ (sometimes we even identify $ \mathcal S'(\underline r) $ with $\mathcal S'(\mathfrak g^*, \underline r) $  if there is no ambiguity).
 We next define
 the vector space:\begin{definition}\label{gra}\begin{equation}
\label{ilpp}  \mathcal G(X):=
\{f\in {\mathcal S'}(\mathfrak g^*)\,|\,
\partial_{X\setminus\underline r}f\in \mathcal S'(\mathfrak g^*, \underline r ),
\text{ for all } \underline r\in \mathcal S_X\}.
\end{equation}
\end{definition}

\begin{example} Let $G=S^1$ and identify $\Lambda$ with $\mathbb Z$ and $\mathfrak g^*$ with $\mathbb R$. Let $X=1^{k+1}=\underbrace{(1,1,\ldots,1)}_{k+1}$.

Then there are two rational subspaces: $\mathbb R$ and the origin. The only cocircuit is $X$ itself and $\partial_X=\frac{d^{k+1}}{dx^{k+1}}$. The space
$D(X)$ consists of the polynomials of degree $\leq k$ and $T_X=x^k/k!$ if $x\geq 0$ and $0$ otherwise.
%So the polynomial $x^k/k!$ equals $T_X+(-1)^kT_{-X}$.
It is easy to see that  $\mathcal G(X)=D(X)\oplus \mathbb R T_X.$
\end{example}

We are now going to recall a few  properties of $\mathcal G(X)$ (see also \cite{dpv1}). For this, given a list of non zero vectors $Z$ in $\mathfrak g^*$, we consider the dual hyperplane arrangement, $a^\perp\subset \mathfrak g$, $a\in Z$. Any connected component $F$ of the complement of this arrangement is called a {\em  regular face} for $Z$. An element $\phi\in F$ decomposes $Z=A\cup B$ where $\phi$ is positive on $A$ and negative on $B$. This decomposition depends only upon $F$. We define
\begin{equation}
\label{tef}T_Z^F=(-1)^{|B|}T_{(A,-B)}.
\end{equation}
Notice that $T_Z^F$ is supported on the cone $C(A,-B)$ of non negative linear combinations of the vectors in the list $(A,-B)$.

Take
 the subset  $\mathcal S_X{(i)}$ of  subspaces  $\underline r\in \mathcal S_X $ of  dimension $i$.
Consider   $\partial_{X\setminus\underline r}$ as an operator on
$\mathcal G(X)$ with values in $\mathcal S'(\mathfrak g^*,\underline r)$.
 Define the spaces
\begin{equation}
\label{CAFI}\mathcal G(X)_{i}:=\cap_{\underline t\in \mathcal S_X{(i-1)}}\ker(
 \partial_{X\setminus \underline t}).
\end{equation}

Notice that   by definition $\mathcal G(X)_0=\mathcal G(X)$, that
$\mathcal G(X)_{\dim \mathfrak g^*}$ is the space  $D(X)$ and that $\mathcal G(X)_{i+1}\subseteq
\mathcal G(X)_{i}$.

\begin{remark}
Consider a polynomial density  $g\in D(X\cap \underline r) $, a face $F_{\underline r}$ defining $X\setminus\underline r=A\cup B$ and  ${T }_{X\setminus\underline r}^{F_{\underline r}} $.  The convolution ${T }_{X\setminus\underline r}^{F_{\underline r}}*g$ is well defined since, for any $z\in \mathfrak g^*$,  the set of pairs  $x\in C(A,-B),y\in  \underline r$ with $x+y=z$ is compact.
\end{remark}

\begin{lemma}\label{lalari}
Let $\underline r \in  \mathcal S_X{(i)}$.

\quad i)  The image of $\partial_{X\setminus\underline r}$ restricted
to $\mathcal G(X)_{i}$ is contained in    $D(X\cap \underline r)$.

\quad ii) Take
rational subspaces $\underline t$  and $\underline r$. For any $g\in D(X\cap \underline r),$
\begin{equation}\label{madre}\partial_{X\setminus \underline t}( {T }_{X\setminus\underline r}^{F_{\underline r}}*g) =
( \partial_{(X\setminus \underline t)\setminus \underline r
}{T }_{X\setminus\underline r}^{F_{\underline r}})* (\partial_{(X\cap \underline r)\setminus
(\underline t \cap \underline r)}g).\end{equation}

\quad iii) If $g$ is in  $D(X\cap \underline r)$, then
${T }_{X\setminus\underline r}^{F_{\underline r}}*g\in \mathcal G(X)_{i}$.
\end{lemma}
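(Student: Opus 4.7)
For (i), I would take $f \in \mathcal G(X)_i$ and study $h := \partial_{X\setminus\underline r}f$. Since $h\in \mathcal S'(\mathfrak g^*,\underline r)$, it corresponds to a unique $\tilde h \in \mathcal S'(\underline r)$. Each cocircuit of $X\cap \underline r$ has the form $Y = (X\cap \underline r)\setminus\underline t'$ for a rational hyperplane $\underline t'$ of $\underline r$, which lies in $\mathcal S_X(i-1)$; the disjoint decomposition $X\setminus\underline t' = (X\setminus\underline r)\sqcup ((X\cap\underline r)\setminus\underline t')$ gives $\partial_Y\tilde h = \partial_{X\setminus\underline t'}f = 0$. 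Fourier transforms then convert these equations into $d_Y\cdot\widehat{\tilde h}=0$, so $\widehat{\tilde h}$ is supported at the origin, where $I_{X\cap\underline r}$ defines a $0$-dimensional scheme. Hence $\widehat{\tilde h}$ is a finite linear combination of derivatives of $\delta_0$, $\tilde h$ is a polynomial density, and the equations $\partial_Y\tilde h=0$ place $\tilde h$ in $D(X\cap\underline r)$ by Definition \ref{ladedix}.

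For (ii), the plan is to iterate the Leibniz identity $\partial_a(u*v) = (\partial_a u)*v = u*(\partial_a v)$ for convolutions, dispatching each $\partial_a$ to $T_{X\setminus\underline r}^{F_{\underline r}}$ when $a\notin\underline r$ and to $g$ when $a\in\underline r$. This uses the disjoint decomposition $X\setminus\underline t = \bigl((X\setminus\underline t)\setminus\underline r\bigr)\sqcup \bigl((X\cap\underline r)\setminus\underline t\bigr)$ together with the identity $(X\cap\underline r)\setminus\underline t = (X\cap\underline r)\setminus(\underline t\cap\underline r)$; the support condition from the remark preceding the lemma ensures every intermediate convolution is well-defined as a tempered distribution.

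For (iii), the combinatorial core is to establish that whenever $\underline s\in\mathcal S_X$ satisfies $\underline r\not\subseteq\underline s$, the list $(X\cap\underline r)\setminus\underline s$ contains a cocircuit of $X\cap\underline r$, whence $\partial_{(X\cap\underline r)\setminus\underline s}\,g = 0$ for every $g\in D(X\cap\underline r)$. The argument runs as follows: $X\cap\underline r\cap\underline s$ spans a subspace of $\underline r$ of dimension at most $i-1$, since otherwise it would span $\underline r$ and force $\underline r\subseteq\underline s$; as $X\cap\underline r$ spans $\underline r$, one can adjoin elements of $X\cap\underline r$ one at a time to enlarge this subspace to a rational hyperplane $H$ of $\underline r$ containing $X\cap\underline r\cap\underline s$, and then $(X\cap\underline r)\setminus H$ is the desired cocircuit. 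Feeding this into (ii) with $\underline t\in\mathcal S_X(i-1)$ handles the case $\underline t\not\subseteq\underline r$, while the case $\underline t\subseteq\underline r$ is immediate because $\underline t$ is then a rational hyperplane of $\underline r$ and $(X\cap\underline r)\setminus\underline t$ is itself a cocircuit of $X\cap\underline r$. For the ambient membership $T_{X\setminus\underline r}^{F_{\underline r}}*g\in\mathcal G(X)$, I would apply the same dichotomy to an arbitrary $\underline s\in \mathcal S_X$: when $\underline r\not\subseteq\underline s$ the derivative vanishes by the above; when $\underline r\subseteq\underline s$, formula (ii) reduces $\partial_{X\setminus\underline s}(T_{X\setminus\underline r}^{F_{\underline r}}*g)$ to $(\partial_{X\setminus\underline s}T_{X\setminus\underline r}^{F_{\underline r}})*g$, whose first factor is (up to sign) a multivariate spline for the sublist $(X\cap\underline s)\setminus\underline r\subseteq\underline s$ and whose second factor is supported in $\underline r\subseteq\underline s$, so both lie in $\mathcal S'(\mathfrak g^*,\underline s)$ and hence so does their convolution. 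The main obstacle is verifying the combinatorial enlargement producing $H$; once that is in place, the rest of (iii) is careful bookkeeping driven by formula (ii).
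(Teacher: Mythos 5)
Your proof is correct and follows essentially the same route as the paper's: part (ii) is the same factorization $\partial_{X\setminus\underline t}=\partial_{(X\setminus\underline t)\setminus\underline r}\,\partial_{(X\cap\underline r)\setminus(\underline t\cap\underline r)}$ distributed over the convolution, and parts (i) and (iii) rest, as in the paper, on the observation that the relevant derivative of $g$ (resp.\ of $\partial_{X\setminus\underline r}f$) is governed by cocircuits of $X\cap\underline r$. You in fact supply two details the paper leaves implicit: the Fourier-transform argument in (i) showing that a tempered distribution on $\underline r$ killed by all cocircuit operators of $X\cap\underline r$ is genuinely a polynomial, and, in (iii), the construction of the rational hyperplane $H\supseteq\langle X\cap\underline r\cap\underline t\rangle$ yielding the cocircuit, together with the verification that $\partial_{X\setminus\underline s}(T^{F_{\underline r}}_{X\setminus\underline r}*g)$ lies in $\mathcal S'(\mathfrak g^*,\underline s)$ when $\underline r\subseteq\underline s$, which is needed for membership in $\mathcal G(X)$.
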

\begin{proof}
{\it i)} First we know, by the definition of $\mathcal G(X)$, that
$\partial_{X\setminus\underline r} \mathcal G(X)_{i}$ is contained in the space
$\mathcal S'(\mathfrak g^*, \underline r )$. Let $\underline t$  be a rational
hyperplane of $\underline r$, so that $\underline t$ is of
dimension $i-1$. By definition, we have that for every $f\in
\mathcal G(X)_{i}$
$$0=\prod_{a\in X\setminus \underline t}\partial_a f=
\prod_{a\in (X\cap \underline r)\setminus \underline t} \partial_a
\partial_{X\setminus\underline r}f.$$ This means that
$\partial_{X\setminus\underline r} f$ satisfies the differential
equations given by the cocircuits of   $X\cap \underline r$, that is,  it lies in $D(X\cap \underline r)$.

\smallskip

{\it ii)} We have that
 $\partial_{X\setminus \underline t} =
 \partial_{(X\setminus \underline t)  \cap \underline r }
 \partial_{(X\setminus \underline t)\setminus \underline r }$ but $ \partial_{(X\setminus \underline t)  \cap \underline r }=
 \partial_{(X\cap \underline r)\setminus (\underline t \cap \underline r)} $. Thus
$$\partial_{X\setminus \underline t}( {T }_{X\setminus\underline r}^{F_{\underline r}}*g) =
( \partial_{(X\setminus \underline t)\setminus \underline r
}{T }_{X\setminus\underline r}^{F_{\underline r}})* (\partial_{(X\cap \underline r)\setminus
(\underline t \cap \underline r)}g)$$
as desired.

iii) If $\underline t$ does not contain $\underline r$,  we get that
$(\partial_{(X\cap \underline r)\setminus
(\underline t \cap \underline r)}g)=0$ and hence, by \eqref{madre},
$$\partial_{X\setminus \underline t}( {T }_{X\setminus\underline r}^{F_{\underline r}}*g)=0.$$

\end{proof}

Consider the map $\mu_i: \mathcal G(X)_i \to \oplus_{\underline r\in
 \mathcal S_X(i)} D(X\cap \underline r)$ given by $$\mu_i
f:=\oplus_{\underline r\in  \mathcal S_X(i)} \partial_{X\setminus
\underline r}f$$ and the map ${\bf P}_i:\oplus_{\underline r\in
 \mathcal S_X(i)} D(X\cap \underline r)\to \mathcal G(X)_i$ given by
$${\bf P}_i(\oplus g_{\underline
r}):= \sum T_{X\setminus\underline r}^{F_{\underline r}}*g_{\underline r }.$$

\begin{theorem}\label{lemexact}
The sequence $$0\longrightarrow \mathcal G(X)_{i+1} \longrightarrow \mathcal G(X)_i
\stackrel{\mu_i}\longrightarrow\oplus_{\underline r\in
 \mathcal S_X(i)} D(X\cap \underline r)\longrightarrow 0$$ is
exact. Furthermore, the map ${\bf P}_i$ provides a splitting  of
this exact sequence, i.e. $\mu_i {{\bf P}_i}={\rm Id}$.
\end{theorem}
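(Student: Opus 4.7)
The plan is to prove the theorem in three steps, the third carrying most of the content. The exactness at $\mathcal G(X)_{i+1}$ is formal (injectivity of an inclusion), and exactness at $\mathcal G(X)_i$ follows at once from unwinding definitions: for $f\in\mathcal G(X)_i$, the condition $\mu_i f = 0$ means $\partial_{X\setminus\underline r}f=0$ for every $\underline r\in\mathcal S_X(i)$, which together with membership in $\mathcal G(X)_i$ is precisely membership in $\mathcal G(X)_{i+1}$. So the key task is to establish surjectivity, and for this I will prove the stronger assertion $\mu_i {\bf P}_i=\mathrm{Id}$; exactness on the right is an automatic consequence.

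By Lemma \ref{lalari} iii), ${\bf P}_i$ takes values in $\mathcal G(X)_i$, so $\mu_i {\bf P}_i$ makes sense. To compute it I will fix $\underline t,\underline r\in \mathcal S_X(i)$ and $g_{\underline r}\in D(X\cap\underline r)$, and analyze $\partial_{X\setminus\underline t}(T^{F_{\underline r}}_{X\setminus\underline r}*g_{\underline r})$ by directly applying formula \eqref{madre} of Lemma \ref{lalari}. This splits into two cases according as $\underline t=\underline r$ or not.

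\emph{Case $\underline t\neq\underline r$.} Here I need to show the summand vanishes, and the obstacle is to produce a cocircuit of $X\cap\underline r$ inside $(X\cap\underline r)\setminus(\underline t\cap\underline r)$ so that $\partial_{(X\cap\underline r)\setminus(\underline t\cap\underline r)}g_{\underline r}=0$. Let $\underline u$ be the rational span, relative to $X$, of $X\cap\underline t\cap\underline r$ inside $\underline r$. Since $\underline r\neq\underline t$ and both have dimension $i$, we have $\underline u\subsetneq\underline r$, hence $\dim\underline u\leq i-1$. Because $X\cap\underline r$ spans $\underline r$, I can extend a basis of $\underline u$ drawn from $X\cap\underline r$ to a basis of a rational hyperplane $H$ of $\underline r$ (again from elements of $X\cap\underline r$). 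Then $Y:=(X\cap\underline r)\setminus H$ is a cocircuit of $X\cap\underline r$ contained in $(X\cap\underline r)\setminus(\underline t\cap\underline r)$, so $\partial_{(X\cap\underline r)\setminus(\underline t\cap\underline r)}g_{\underline r}=0$, and the corresponding term in \eqref{madre} vanishes.

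\emph{Case $\underline t=\underline r$.} Now $(X\setminus\underline t)\setminus\underline r=X\setminus\underline r$ and $(X\cap\underline r)\setminus(\underline t\cap\underline r)=\emptyset$, so \eqref{madre} reduces to $(\partial_{X\setminus\underline r}T^{F_{\underline r}}_{X\setminus\underline r})*g_{\underline r}$. Writing $T^{F_{\underline r}}_{X\setminus\underline r}=(-1)^{|B|}T_{(A,-B)}$ as in \eqref{tef} and using \eqref{lecon} together with $\partial_{-b}=-\partial_b$, one gets $\partial_B T_{(A,-B)}=(-1)^{|B|}T_A$ and then $\partial_A T_A=\delta_0$, yielding $\partial_{X\setminus\underline r}T^{F_{\underline r}}_{X\setminus\underline r}=\delta_0$. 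Thus the summand equals $g_{\underline r}$. Combining both cases gives $\mu_i{\bf P}_i(\oplus g_{\underline r})=\oplus g_{\underline r}$, which simultaneously proves surjectivity of $\mu_i$ and exhibits ${\bf P}_i$ as a splitting. The hardest step is the combinatorial lemma in Case 1: producing the cocircuit by extending the rational span of $X\cap\underline t\cap\underline r$ to a rational hyperplane of $\underline r$ using that $X\cap\underline r$ spans $\underline r$.
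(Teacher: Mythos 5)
Your proof is correct and follows essentially the same route as the paper: reduce everything to the identity $\mu_i{\bf P}_i=\mathrm{Id}$ and verify it summand by summand via formula \eqref{madre}, with the off-diagonal terms killed because $(X\cap\underline r)\setminus(\underline t\cap\underline r)$ contains a cocircuit of $X\cap\underline r$. The only difference is that you spell out the cocircuit-extension argument and the computation $\partial_{X\setminus\underline r}T^{F_{\underline r}}_{X\setminus\underline r}=\delta_0$, which the paper delegates to Lemma \ref{lalari} ii)--iii) and formula \eqref{lecon}.
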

\begin{proof}
By definition, $\mathcal G(X)_{i+1}$ is the kernel of $\mu_i$,  thus we only need to show that
$\mu_i {{\bf P}_i}={\rm Id}$.
Given   $\underline r\in  \mathcal S_X(i)$ and $g\in D(X\cap \underline r)$,  by Formula \eqref{madre} we have  $\partial_{X\setminus \underline r}({T }_{X\setminus\underline r}^{F_{\underline r}}*g)=g.$  If instead we take
 another subspace  $\underline t\neq \underline r$ of $ \mathcal S_X(i)$, then
$\underline r\cap \underline t$ is a proper subspace of
$\underline t$. As we have seen above, if $g\in D(X\cap \underline r)$, $\partial_{X\setminus
\underline t} ({T }_{X\setminus\underline r}^{F_{\underline r}}*g) =0.$ Thus,  given a family $g_{\underline r}\in D(X\cap \underline r)$, the
function $f= \sum_{\underline t\in  \mathcal S_X(i)} {T
}_{X\setminus\underline t}^{F_{\underline t}}*g_{\underline t}$ is such that
$\partial_{X\setminus \underline r}f=g_ {\underline r}$ for all $\underline r\in   \mathcal S_X(i)$. This proves our claim
that  $\mu_i {{\bf P}_i}={\rm Id}$.\end{proof}

Putting together these   facts, we immediately get

\begin{theorem}\label{gestad1}
 Choose,  for every rational space $\underline r$,
  a  regular face $F_{\underline r}$ for  $X\setminus \underline r$. Then:
\begin{equation}\label{ladeco}\mathcal G(X)=\oplus_{\underline r\in \mathcal S_X}
T_{X\setminus r}^{F_{\underline r}}*D(X\cap \underline r ).\end{equation}
\end{theorem}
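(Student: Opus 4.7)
The plan is to obtain the decomposition \eqref{ladeco} by iterating the split short exact sequences furnished by Theorem \ref{lemexact}, working from the bottom filtration degree $i=0$ up to the top degree $i=s:=\dim\mathfrak g^*$.

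First I would record the shape of the filtration. By the definitions \eqref{CAFI} we have the descending chain
$$\mathcal G(X)=\mathcal G(X)_0\supseteq\mathcal G(X)_1\supseteq\cdots\supseteq\mathcal G(X)_s=D(X),$$
where the last equality uses $\mathcal S_X(s-1)=\{\underline t\subsetneq\mathfrak g^*\}$: the only cocircuits that appear as $X\setminus\underline t$ for $\underline t\in\mathcal S_X(s-1)$ are exactly the cocircuits of $X$, so the defining conditions for $\mathcal G(X)_s$ match Definition \ref{ladedix}. Note also that $\mathcal S_X(s)=\{\mathfrak g^*\}$ with $X\cap\mathfrak g^*=X$, and $T_{X\setminus\mathfrak g^*}^{F_{\mathfrak g^*}}=T_\emptyset=\delta_0$ by \eqref{lecon}, so the top summand in \eqref{ladeco} is literally $D(X)=\mathcal G(X)_s$.

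Next, for each $0\leq i\leq s-1$, Theorem \ref{lemexact} gives a split short exact sequence
$$0\longrightarrow\mathcal G(X)_{i+1}\longrightarrow\mathcal G(X)_i\xrightarrow{\ \mu_i\ }\bigoplus_{\underline r\in\mathcal S_X(i)}D(X\cap\underline r)\longrightarrow 0,$$
with splitting ${\bf P}_i$. Because $\mu_i{\bf P}_i=\mathrm{Id}$, the operator ${\bf P}_i$ is injective, and its image is the internal direct sum $\bigoplus_{\underline r\in\mathcal S_X(i)}T_{X\setminus\underline r}^{F_{\underline r}}*D(X\cap\underline r)$ inside $\mathcal G(X)_i$ (directness comes from the fact that $\mu_i$ separates the summands: applying $\partial_{X\setminus\underline r}$ picks out the $\underline r$-component, by part (iii) of Lemma \ref{lalari} combined with Formula \eqref{madre} applied at $\underline t=\underline r$). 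This gives the internal direct sum decomposition
$$\mathcal G(X)_i=\mathcal G(X)_{i+1}\oplus\bigoplus_{\underline r\in\mathcal S_X(i)}T_{X\setminus\underline r}^{F_{\underline r}}*D(X\cap\underline r).$$

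Finally, I would assemble these by descending induction: iterating the displayed equality from $i=0$ down through $i=s-1$ telescopes to
$$\mathcal G(X)=\bigoplus_{i=0}^{s-1}\Bigl(\bigoplus_{\underline r\in\mathcal S_X(i)}T_{X\setminus\underline r}^{F_{\underline r}}*D(X\cap\underline r)\Bigr)\oplus\mathcal G(X)_s,$$
which, together with the identification $\mathcal G(X)_s=T_{X\setminus\mathfrak g^*}^{F_{\mathfrak g^*}}*D(X\cap\mathfrak g^*)$ noted above, is exactly \eqref{ladeco}. The only step that requires care is verifying directness of the sum inside each $\mathrm{Image}({\bf P}_i)$; this is the content of part (iii) of Lemma \ref{lalari} and the identity $\mu_i{\bf P}_i=\mathrm{Id}$ already established in Theorem \ref{lemexact}, so no further work is needed.
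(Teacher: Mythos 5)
Your proof is correct and follows exactly the route the paper intends: Theorem \ref{gestad1} is stated there as an immediate consequence of iterating the split exact sequences of Theorem \ref{lemexact}, which is precisely your telescoping argument. Your verification of the directness inside each $\mathrm{Image}({\bf P}_i)$ and of the identification $\mathcal G(X)_s=D(X)=T_\emptyset*D(X)$ just makes explicit what the paper leaves implicit.
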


\begin{corollary} The dimension of $\mathcal G(X)$ equals the number of sublists of $X$ which are linearly independent.
\end{corollary}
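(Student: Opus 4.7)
The plan is to combine the direct sum decomposition of Theorem~\ref{gestad1} with the dimension formula for $D(Z)$ coming from \cite{dp1}, and then set up a straightforward bijection on sublists of $X$.

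First, I would check that the sum in \eqref{ladeco} genuinely adds dimensions. The splitting $\mu_i\circ{\bf P}_i = \mathrm{Id}$ provided by Theorem~\ref{lemexact}, applied at $i = \dim \underline r$, shows that for each $\underline r$ the map $g\mapsto T_{X\setminus\underline r}^{F_{\underline r}}*g$ is injective on $D(X\cap \underline r)$ and that distinct summands in \eqref{ladeco} are detected by distinct operators $\partial_{X\setminus\underline r}$. Hence
$$\dim \mathcal G(X)\;=\;\sum_{\underline r\in \mathcal S_X}\dim D(X\cap \underline r).$$

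Next I would identify each summand combinatorially. By Remark~\ref{ladua}, for any list $Z$ of nonzero vectors spanning a vector space $V$, the space $D(Z)$ is graded dual to $S[V^*]/I_Z$, whose dimension $d(Z)$ is, by the statement recorded just after Definition~\ref{IIX} (citing \cite{dp1}, Theorem~11.13), precisely the number of bases of $V$ extractable from $Z$. Applied to $Z = X\cap \underline r$ inside $V=\underline r$, this yields
$$\dim D(X\cap \underline r)\;=\;\#\{\sigma\subset X\cap \underline r \,:\, \sigma\text{ is a basis of }\underline r\}.$$

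Finally I would exhibit a bijection between linearly independent sublists of $X$ and pairs $(\underline r,\sigma)$ with $\underline r\in\mathcal S_X$ and $\sigma\subset X\cap \underline r$ a basis of $\underline r$: send a linearly independent $\sigma\subset X$ to $(\langle\sigma\rangle,\sigma)$, noting that $\langle\sigma\rangle$ is automatically rational because it is spanned by elements of $X$ (namely $\sigma$); the inverse is the projection $(\underline r,\sigma)\mapsto \sigma$. Summing the previous displayed equality over $\underline r\in\mathcal S_X$ and applying this bijection gives the claim. The only slightly delicate point is the first step, namely that Theorem~\ref{gestad1} really provides an internal direct sum of subspaces of $\mathcal S'(\mathfrak g^*)$; the remainder is essentially the matroid-theoretic observation that counting bases across all rational flats is the same as counting linearly independent sublists.
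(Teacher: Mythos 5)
Your proposal is correct and follows essentially the same route as the paper: the paper's proof also deduces the count directly from the decomposition \eqref{ladeco} together with the fact from \cite{dp1} that $D(Z)$ has dimension equal to the number of bases extractable from $Z$, the summation over rational subspaces being exactly your bijection. Your extra verification that the sum in \eqref{ladeco} is direct is already part of the statement of Theorem \ref{gestad1}, so no gap either way.
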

\begin{proof}. This follows immediately from \eqref{ladeco} and the fact (see for example \cite{dp1}  Theorem 11.8) that $D(X)$ has dimension equal to the number of bases which can be extracted from $X.$
\end{proof}

%\begin{proof}

%\end{proof}

We define
\begin{equation}\label{itilg}
\tilde {\mathcal G}(X)=S[\mathfrak g^*]\mathcal G(X),\quad \tilde {\mathcal G}_i(X)=S[\mathfrak g^*]\mathcal G_i(X)
\end{equation} where the elements in $S[\mathfrak g^*]$ act on distributions as differential operators with constant coefficients.
\begin{remark}\label{lapes} If we set $$D^{\mathfrak g}(X\cap
\underline r )=S[\mathfrak g^*] D (X\cap
\underline r )\cong S[\mathfrak g^*]\otimes_{S[(\mathfrak g/\mathfrak g_{\underline r})^*]}D(X\cap \underline r),$$ Theorem \ref{lemexact}, together with the fact that the maps $\mu_i$ and $P_i$ extend to $S[\mathfrak g^*]$-module maps (which we denote by the same letter), immediately implies that we have an exact sequence of $S[\mathfrak g^*]$-modules
$$0\to\tilde{\mathcal G}_{i+1}(X) \to  \tilde{\mathcal G}_{i}(X)  \stackrel{\mu_i}\to
\oplus_{\underline r\in  \mathcal S_X{(i)}}D^{\mathfrak g} (X\cap
\underline r )\to 0.$$\end{remark}
Furthermore one can give generators for  $\tilde {\mathcal G}(X)$ as a $S[\mathfrak g^*]$-module  as follows:

\begin{theorem}\label{gestad2}
$$\tilde {\mathcal G}(X)=\sum_{F}S[\mathfrak g^*]T_X^F$$ as $F$ runs over all regular faces for $X$.\end{theorem}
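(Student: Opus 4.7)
The easy inclusion $\sum_F S[\mathfrak g^*] T_X^F \subseteq \tilde{\mathcal G}(X)$ reduces to showing $T_X^F \in \mathcal G(X)$ for every regular face $F$. Given a rational subspace $\underline r$, the partition $X = A \sqcup B$ associated to $F$ splits into its $\underline r$-part and its complement, inducing regular faces $F_1$ for $X \cap \underline r$ in $\underline r$ and $F_2$ for $X \setminus \underline r$. From the iterated convolution definition of the spline one obtains the product formula $T_X^F = T_{X\cap \underline r}^{F_1} * T_{X\setminus \underline r}^{F_2}$. Applying $\partial_{X\setminus \underline r}$ and using $\partial_{X\setminus \underline r} T_{X\setminus \underline r}^{F_2} = \delta_0$ gives $\partial_{X\setminus \underline r} T_X^F = T_{X\cap \underline r}^{F_1}$, which is supported in $\underline r$ and so lies in $\mathcal S'(\mathfrak g^*, \underline r)$.

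For the reverse inclusion $\tilde{\mathcal G}(X) \subseteq \sum_F S[\mathfrak g^*] T_X^F$, the plan is to induct on $s = \dim \mathfrak g$. By Theorem \ref{gestad1}, it suffices to show that for each $\underline r \in \mathcal S_X$ and each $g \in D(X \cap \underline r)$ the summand $T_{X\setminus \underline r}^{F_{\underline r}} * g$ lies in $\sum_F S[\mathfrak g^*] T_X^F$. For $\underline r \subsetneq \mathfrak g^*$, apply the induction hypothesis in the smaller space $\underline r$ to obtain $g \in D(X \cap \underline r) \subseteq \tilde{\mathcal G}(X \cap \underline r) = \sum_G S[\underline r]\, T_{X\cap \underline r}^G$, writing $g = \sum_i P_i T_{X \cap \underline r}^{G_i}$ with $P_i \in S[\underline r]$. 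Convolving with $T_{X\setminus \underline r}^{F_{\underline r}}$ and applying the product formula, the terms for which $G_i$ meets $F_{\underline r}$ in $\mathfrak g$ directly become $P_i T_X^{F_i}$ (with $F_i$ the face of $X$ combining $G_i$ and $F_{\underline r}$); the remaining terms are handled by jump relations that express them, modulo lower-order corrections controlled by the induction, as combinations of genuine $T_X^F$'s.

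The principal obstacle is the remaining case $\underline r = \mathfrak g^*$, which requires $D(X) \subseteq \sum_F S[\mathfrak g^*] T_X^F$ directly, since the induction provides no smaller instance to reduce to. The plan is to exploit the jump relations for splines across individual hyperplanes: for adjacent faces $F, F'$ of $X$ differing only in the sign of some $a \in X$, the difference $T_X^F - T_X^{F'}$ is supported on $a^\perp$ and is, up to sign, of the form $T_{X \setminus a}^{F''}$ interpreted as a transverse spline. Iterating such differences and combining them with the action of $S[\mathfrak g^*]$ produces polynomial distributions belonging to $D(X)$; the one-dimensional template is the identity $T_X^+ - T_X^- = x^k/k!$ from the example following Definition \ref{gra}, which produces the top polynomial of $D(X)$ directly, after which differentiation yields all of $D(X)$. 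Working out the combinatorics of this construction for arbitrary $X$ and matching the resulting generators to the known structure of $D(X)$ is the technical heart of the proof.
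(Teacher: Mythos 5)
Your skeleton is the right one: the easy inclusion via the factorization of $T_X^F$ along a rational subspace, and the reduction of the hard inclusion, via Theorem \ref{gestad1}, to showing that each summand $T_{X\setminus\underline r}^{F_{\underline r}}*g$ with $g\in D(X\cap\underline r)$ lies in $M:=\sum_FS[\mathfrak g^*]T_X^F$, handled by induction on the system. But you are missing the one computation that makes this work, namely the paper's perturbation lemma: for \emph{every} regular face $F_{\underline r}$ for $X\setminus\underline r$ (given by some $u_0\in\underline r^{\perp}$ regular for $X\setminus\underline r$) and \emph{every} regular face $K$ for $X\cap\underline r$, one has $T_{X\setminus\underline r}^{F_{\underline r}}*T_{X\cap\underline r}^{K}=T_X^{F}$ for the single face $F$ containing $u_0+\epsilon u_1$, where $u_1$ restricts to a point of $K$ and $\epsilon>0$ is small; this follows at once from \eqref{lecon} and \eqref{tef} because $u_0+\epsilon u_1$ has the signs of $u_0$ on $X\setminus\underline r$ and of $u_1$ on $X\cap\underline r$. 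Consequently there is no dichotomy between faces $G_i$ that ``meet'' $F_{\underline r}$ and those that do not, and no jump relations or ``lower-order corrections'' are needed in the intermediate cases: once the inductive hypothesis writes $g=\sum_iP_iT_{X\cap\underline r}^{G_i}$, every term is literally of the form $P_iT_X^{F_i}$.

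The genuine gap is the summand $\underline r=\mathfrak g^*$ of \eqref{ladeco}, i.e. the containment $D(X)\subseteq\sum_FS[\mathfrak g^*]T_X^F$, which you correctly isolate but do not prove, and your proposed mechanism starts from a false statement. If $F,F'$ are separated by a wall lying in $a^\perp$ (so that crossing it flips the signs of all $j$ occurrences of $a$), then
$$T_X^F-T_X^{F'}=\pm\bigl(T_{a^{j}}-(-1)^{j}T_{(-a)^{j}}\bigr)*T_{X\setminus\{a^{j}\}}^{F''},$$
and $T_{a^{j}}-(-1)^{j}T_{(-a)^{j}}$ is the polynomial density $t^{j-1}/(j-1)!$ on the \emph{whole line} $\mathbb R a\subset\mathfrak g^*$. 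So the jump is not supported on a hyperplane (nor does ``$a^\perp$'' even live in $\mathfrak g^*$), and it is not a transverse copy of $T_{X\setminus a}^{F''}$: it is a spline smeared along $\mathbb R a$, i.e. an element of the $\underline r=\mathbb R a$ summand of \eqref{ladeco} — one step up the filtration $\mathcal G(X)_i$, but far from $D(X)$ unless $s=1$. Iterating such jumps until one reaches genuine polynomials, and checking that the polynomials so obtained span all of $D(X)$, is exactly the nontrivial content you defer as ``the technical heart,'' so the proof is incomplete precisely where the work is. Note that this top case is also the one the induction on the size of the system cannot reach (for $\underline r=\mathfrak g^*$ the convolution identity degenerates to $\delta_0*T_X^K=T_X^K$), so to complete your argument you must either supply this spanning statement directly or import it from \cite{dpv} or \cite{dp1}.
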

\begin{proof}
 Denote by $M$ the $S[\mathfrak g^*]$ module generated by the elements $T_X^F$, as $F$ runs on all regular faces.   In general,  from  the description of $\tilde{\mathcal G}(X)$ given in Formula
 \eqref{ladeco},  it is enough to prove that  elements of the  type   $
T_{X\setminus \underline r}^{F_{\underline r}}*g$   with $g\in D (X\cap
\underline r)$ are in $M$. As $D (X\cap \underline r)\subset \mathcal G(X\cap \underline r)$,
it is sufficient to prove by induction that each element
$T_{X\setminus \underline r}^{F_{\underline r}}*T_{X\cap \underline r}^{K}$
is in $M$,  where $K$  is any regular face for the system  $X\cap
\underline r$.
We choose a   linear function $u_0$ in the face $F_{\underline r}$. Thus $u_0$   vanishes on  $\underline r$ and  is non zero on every element $a\in X$ not in $\underline r$. We choose a linear function $u_1$ such that the restriction of $u_1$ to $\underline r$ lies in the face $K$.  In particular, $u_1$ is non zero on every element $a\in X\cap \underline r$.
 We can choose $\epsilon$ sufficiently small such that $u:=u_0+\epsilon u_1$ is  non zero on every element $a\in X$. Then $u_0+\epsilon u_1$ defines a regular face $F$.  We see that a vector $a\in X\setminus \underline r$ is positive for  $u$ if and only if it is positive for $u_0$, similarly  a vector $a\in X\cap \underline r$ is positive for  $u$ if and only if it is positive for $u_1$, hence from \eqref{lecon}, and  the definition \eqref{tef},  it follows that  ${T  }_{X\setminus \underline r}^{F_{\underline r}}*{T}_{X\cap \underline r}^{K}$ is equal to ${T}_X^F$.
\end{proof}\bigskip

This construction has a discrete counterpart, thoroughly studied in \cite{dpv1} and related to the study of the index of transversally elliptic operators and of computations in equivariant K-theory in which differential operators are replaced by difference operators.

\section{Equivariant cohomology with compact supports of $T^*M_X$.}

\subsection{Equivariant cohomology with compact supports and the infinitesimal index.} Let $G$ be a compact Lie group, in \cite{dpv34} we have introduced
 a de Rham model for the equivariant cohomology $H^*_{G,c}(Z)$ with compact supports of a $G$-stable closed subset $Z\subset N$ of a $G$-manifold $N$.
 A representative of an element  in $ H^*_{G,c}(Z)$ is a compactly supported equivariant form on $N$ such that $D\alpha $ is zero on a neighborhood of $Z$. Two representatives $\alpha_1,\alpha_2$ agree if $\alpha_1-\alpha_2=D\beta+\gamma$ where $\beta,\gamma$ are compactly supported and $\gamma$ vanishes on a neighborhood of $Z$.

 Furthermore assume that we have a $G$-equivariant one form $\sigma$ on $N$ called an action form. We define the corresponding moment map
 $\mu:N\to \mathfrak g^*$ by setting for any $u\in \mathfrak g$, $n\in N$, $\mu (n)(x):=-\langle \sigma, v_x\rangle(n)$, $v_x$   being the vector field  on $N$ corresponding to $x$.   Set $\Omega(x):=D\sigma(x)=d\sigma+\mu(x)$  ($D$ is the equivariant differential).

 If we take as $Z$ the zeroes $N^0=\mu^{-1}(0)$ of the moment map, we have then  defined a map of $S[\mathfrak g^*]$-modules
 $${\rm infdex}:H^*_{G,c}(N^0)\to  \mathcal S'(\mathfrak g^*)^G$$
 called infinitesimal index. For $\alpha(x)$ a form giving a cohomology class $[\alpha]\in H^*_{G,c}(N^0)$ and $f$ a test function we have:\begin{equation}\label{definf}
 \langle \infdex_G^{\mu}([\alpha]),f\rangle = \lim_{s\to \infty} \int_N
 \int_{\mathfrak g}
 e^{is \Omega(x)} \alpha(x) \hat f(x) dx
 \end{equation}
This is a well defined map from $H_{G,c}^*(N^0)$ to invariant
distributions on $\mathfrak g^*$. We refer to  \cite{dpv34}  for the proof of most of the properties of $H^*_{G,c}(Z)$ and of the infinitesimal index which we are going to use in what follows.

 We are going to study the case in which we start with a $G$-manifold $M$. We set $N= T^*M$ and we take the canonical one form $\sigma$. In this case it follows immediately from the definitions that $(T^*M)^0$ equals the space $T^*_GM$ whose  fiber over a point $x\in M$ is  formed by all the cotangent vectors $\xi\in T^*_xM$  which vanish on the tangent space to the orbit of $x$  under $G$, in the point $x$. Thus   each fiber $(T^*_GM)_x$ is a linear subspace  of $T_x^* M$. In general the dimension of $(T^*_GM)_x$  is not constant and this space is not a vector bundle on $M$.

\subsection{Connection forms and   the Chern-Weil map}
We shall use a fundamental notion in Cartan's theory of equivariant cohomology. Let us recall
\begin{definition}
Given an action of a compact Lie group $G$ on a manifold $P$ with finite stabilizers, a {\em connection form} is a $G$-invariant one  form  $\omega\in \mathcal A^1(P)\otimes \mathfrak g$  with coefficients in the Lie algebra of $G$ such that $-\iota_x \omega=x$ for all $x\in  \mathfrak g$.

\end{definition}

If on $P$ with free $G$ action we also have a commuting action of another group $L$,  it is easy to see that there exists
     a $L\times
G$ invariant connection form $\omega\in \mathcal A^1(P)\otimes \mathfrak g$  on $P$ for the free action of $G$.

Let $Q:=P/G$.
Define
the curvature $R $  and the $L$-equivariant curvature $R_y $ of the bundle
$P\to Q$ by \begin{equation}
\label{ledc} R:=d\omega+\frac{1}{2}[\omega,\omega],\quad R_y:=-i_y\omega+R.
\end{equation}

We have the Chern-Weil map $c:S[\mathfrak g^*]\to H^*_{G}(P)\cong H^*(P/G)$ defined by $p \mapsto [p(R)]$ (see \cite{dpv34} p.8).
Through this map we give to $H^*(P/G)$  and $H^*_c(P/G)$  a  $S[\mathfrak g^*]$ module structure.
\begin{proposition}\label{podu} If $G$ acts freely (or with finite stabilizers) on a manifold $ P $,
the  Poincar\'e duality for $Q=P/G$ commutes with the   $S[\mathfrak g^*]^G$-module structures.
 \end{proposition}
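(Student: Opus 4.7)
The plan is to reduce the statement to an elementary de Rham computation once all the module structures are made explicit through a choice of connection form $\omega$ on $P$ with curvature $R$, so that Chern--Weil characteristic classes are represented by globally defined closed forms on $Q$ of even total degree. Throughout, write $n=\dim Q$.

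First I would fix representatives. Given $p\in S[\mathfrak g^*]^G$, let $p(R)\in\Omega^{2\deg p}(Q)$ be the Chern--Weil form, which is closed and of even degree. For a class $\alpha\in H^{k}(Q)$ pick a closed representative $\widetilde\alpha\in\Omega^{k}(Q)$, and for $\beta\in H^{\,n-k}_{c}(Q)$ pick a closed compactly supported representative $\widetilde\beta\in\Omega^{\,n-k}_{c}(Q)$. Next I would identify both module actions at the form level: by the definition given right before the proposition, $c(p)=[p(R)]$, and the $S[\mathfrak g^*]^G$-actions on $H^{*}(Q)$ and on $H^{*}_c(Q)$ are both wedge product with $p(R)$. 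Note that wedging with $p(R)$ preserves the subcomplex of compactly supported forms, since $p(R)$ is smooth and global, so $p(R)\wedge\widetilde\beta$ has support contained in $\mathrm{supp}(\widetilde\beta)$.

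Now Poincar\'e duality for $Q$ is realized by the nondegenerate integration pairing $\langle\alpha,\beta\rangle:=\int_Q\widetilde\alpha\wedge\widetilde\beta$ between $H^k(Q)$ and $H^{n-k}_c(Q)$. Saying that the duality isomorphism $H^k(Q)\cong H^{n-k}_c(Q)^*$ is $S[\mathfrak g^*]^G$-linear is equivalent to the identity $\langle p\cdot\alpha,\beta\rangle=\langle\alpha,p\cdot\beta\rangle$ for every $p\in S[\mathfrak g^*]^G$. Because $p(R)$ has even total degree, it commutes in the graded-commutative sense with any form, so
\[
\langle p\cdot\alpha,\beta\rangle
=\int_Q p(R)\wedge\widetilde\alpha\wedge\widetilde\beta
=\int_Q\widetilde\alpha\wedge p(R)\wedge\widetilde\beta
=\langle\alpha,p\cdot\beta\rangle,
\]
which is exactly the desired compatibility.

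The routine parts of this plan are the sign-chase in the last display and checking that wedging with $p(R)$ preserves closed compactly supported forms; neither is a real obstacle. The only genuine point to verify carefully is that the $S[\mathfrak g^*]^G$-module structure on $H^{*}_c(Q)$ used in the statement really is multiplication by the Chern--Weil form $p(R)$ (equivalently, that it agrees with the natural $H^{*}(Q)$-module structure on $H^{*}_c(Q)$ coming from cup product). This is the step where one has to unwind the definitions in Section~4.2 and in \cite{dpv34}. Once this is done, the proposition follows immediately from the displayed identity, and extends verbatim from the free case to the finite-stabilizer case by passing to a finite \'etale cover, since all ingredients (connection form, curvature, Chern--Weil representative, integration pairing) are defined there and descend.
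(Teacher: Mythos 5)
Your proposal is correct and follows essentially the same route as the paper's own proof: both module structures are identified with multiplication by the Chern--Weil form $p(R)$, and the compatibility with the integration pairing $\int_Q\alpha\wedge\beta$ is then immediate since $p(R)$ is closed of even degree. The paper states exactly these three ingredients (Chern--Weil origin of both module structures, the $H^*(Q)$-module structure on $H^*_c(Q)$, and duality via integration) in a more condensed form.
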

\begin{proof}   This   depends upon the fact that the $S[\mathfrak g^*]^G$-module structure of $H^*(Q)$ comes from the Chern--Weil morphism $S[\mathfrak g^*]^G\to H^{even}(Q)$ determined by the bundle.   The $S[\mathfrak g^*]^G$-module structure of $H^*_c(Q)$ comes from the same morphism and the fact that $H^*_c(Q)$ is a  $H^*(Q)$  module under multiplication and finally that duality is  given by integration formula $\int_Q\alpha\wedge \beta$  with $\alpha, \beta$ closed and $\beta$ with compact support.
\end{proof}

\subsection{The equivariant cohomology of $T^*_GM_X^{fin}$}
 Our task is now to use the infinitesimal index to compute the equivariant cohomology with compact supports of $T^*_GM_X$ and more generally of $T_G^*M_{X,\geq k}$. Notice that if we consider ordinary equivariant cohomology, it is immediate by
  $G$-homotopy equivalence to deduce

\begin{proposition}\label{eqcoh} The equivariant cohomology  of the space  $T_G^*M_{X,\geq k}$ equals that of  $ M_{X,\geq k}$ for all $k$.
\end{proposition}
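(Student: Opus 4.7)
The plan is to exhibit a $G$-equivariant strong deformation retraction of $T_G^{*}M_{X,\geq k}$ onto its zero section, which is canonically identified with $M_{X,\geq k}$, and then invoke $G$-homotopy invariance of equivariant cohomology.

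First, I would unwind the definition of $T_G^{*}M$ as recalled in the previous subsection: over a point $x\in M$, the fiber $(T_G^{*}M)_x$ is the annihilator in $T_x^{*}M$ of the tangent space to the orbit $G\cdot x$. This is a \emph{linear} subspace of $T_x^{*}M$, even though its dimension jumps as $x$ varies (so $T_G^{*}M$ need not be a vector bundle on $M$). The crucial point for us is only that each fiber is star-shaped around the origin, and that the zero section is contained in $T_G^{*}M$. Moreover, the condition ``$\xi$ annihilates $T_x(G\cdot x)$'' is preserved under the scalar action $\xi\mapsto t\xi$ for all $t\in[0,1]$, and restricts to each of the open $G$-stable strata $M_{X,\geq k}$ since the orbit structure is respected by the inclusion.

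Next, I would define
\begin{equation*}
H:T_G^{*}M_{X,\geq k}\times[0,1]\longrightarrow T_G^{*}M_{X,\geq k},\qquad H\bigl((x,\xi),t\bigr)=(x,t\xi).
\end{equation*}
By the observation above, this is well defined. It is continuous, $G$-equivariant (because the $G$-action on $T^{*}M$ is fiberwise linear, hence commutes with scalar multiplication), and satisfies $H_0=\mathrm{id}$ while $H_1$ is the retraction onto the zero section. If $i:M_{X,\geq k}\hookrightarrow T_G^{*}M_{X,\geq k}$ denotes the zero section, then $H_1\circ i=\mathrm{id}_{M_{X,\geq k}}$ and $i\circ H_1$ is $G$-homotopic to the identity via $H$. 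Thus $i$ is a $G$-homotopy equivalence.

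Finally, since equivariant cohomology (in its Borel model, $H_G^{*}(Y)=H^{*}(EG\times_G Y)$) is a homotopy invariant in the $G$-equivariant sense, the map $i^{*}$ induces an isomorphism
\begin{equation*}
i^{*}:H_G^{*}\bigl(T_G^{*}M_{X,\geq k}\bigr)\xrightarrow{\ \sim\ }H_G^{*}\bigl(M_{X,\geq k}\bigr),
\end{equation*}
as asserted. There is no serious obstacle: the only thing to be checked is that the natural radial contraction stays inside $T_G^{*}M$, which is immediate from the linearity of the fibers, and that it respects the stratum $M_{X,\geq k}$, which is automatic since the homotopy is the identity on the base.
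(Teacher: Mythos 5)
Your proof is correct and is precisely the argument the paper intends: the paper dismisses this as ``immediate by $G$-homotopy equivalence,'' and the fiberwise radial contraction onto the zero section, which stays inside $T_G^*M_{X,\geq k}$ because the fibers are linear and the homotopy is the identity on the base, is exactly the justification. (Only a trivial slip: with $H((x,\xi),t)=(x,t\xi)$ it is $H_1$ that is the identity and $H_0$ the retraction, not the other way around.)
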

  We have already remarked that,
 in the case $k=s$, we have $M_{X,\geq k}=M_{X}^{fin}$ and that $H^*_{G}(M_X^{fin})=D^*(X)$. Now, since $G$ acts on $M_X^{fin}$ with finite stabilizers, and we use cohomology with real coefficients, we get that $H^*_{G}(M_X^{fin})=H^* (M_X^{fin}/G)$ and by Poincar\'e duality
\begin{equation}\label{dual}H^h_{G,c}(M_X^{fin})=H^h_c (M_X^{fin}/G)=(H^{2|X|-s-h} (M_X^{fin}/G))^*=D^{2|X|-s-h}(X).\end{equation}

Now, in order to compute the equivariant cohomology with compact supports of $T_G^*M_X^{fin}$, we need some well known  general considerations.

%We begin by computing the equivariant cohomology with compact support of a
%  real finite dimensional $G$-module $M$.
%  \begin{proposition} As a graded $S[\mathfrak g^*]$-module, $H^*_{G,c}(M)$ is free of rank one with a generator in degree equal to $\dim M$.
%  \end{proposition}
% \begin{proof} Choose a positive definite $G$ invariant scalar product on $  M$. Take the orthogonal sum  $\tilde M$ of $M$ and $\mathbb R$ with the standard Euclidean structure and trivial $G$ action.   Take the unit sphere $S\subset \tilde M$ pointed by the $G$-fixed point $p=(0,1)\in \tilde M$.
%
% We have that $H^*_{G,c}(M)=H^*_G(S,p)$.  Since $\tilde M$ contains a trivial component, its equivariant Euler class equals to $0$. It then follows that there is a Thom class $\tau_M\in H_G^{\dim M}(S,p)=H^{\dim M}_{G,c}(M)$ restricting to the fundamental class of $S$ and that
% $H^*_G(S,p)=S[\mathfrak g^*]\tau_M$.
% \end{proof}
%
%
%The above construction can be easily globalized to get  a Thom isomorphism, for equivariant bundles. So,

  Let $N$ be a $G$-manifold,  $\mathcal M$ be a   $G$-equivariant vector bundle on $N$ of rank $r$ with projection $p:\mathcal M\to  N$. Then (see \cite{mat-qui}), there is an equivariant Thom form $\tau_{\mathcal M}$, which can be taken supported in any arbitrarily small disk bundle   around $N$ in $\mathcal M $, such that  in particular:
\begin{proposition}\label{thom} The map
$$C: H^*_{G,c}(N)\to H^{*+r}_{G,c}(\mathcal M)$$
defined by $C(\alpha)=p^*(\alpha)\wedge \tau_{\mathcal M}$ is  an isomorphism
\end{proposition}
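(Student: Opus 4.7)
The plan is to construct an explicit inverse to $C$ via equivariant fiber integration and then verify the two composites. Define
$$p_{*}:\Omega^{*+r}_{G,c}(\mathcal{M})\longrightarrow \Omega^{*}_{G,c}(N)$$
by integration along the oriented fibers of the rank-$r$ equivariant bundle $p$. Compactness of support is preserved, since $p(\mathrm{supp}(\beta))$ is compact whenever $\mathrm{supp}(\beta)$ is, and the map is $G$-equivariant after choosing a $G$-invariant fiber metric (so that $G$ acts by orientation-preserving isometries on fibers, handled by averaging over the compact group). One checks that $p_{*}$ commutes with the Cartan equivariant differential $D=d+\iota_{v_{x}}$: fiber integration commutes with $d$ in the usual way, and the contribution of the contraction term $\iota_{v_{x}}$ is compatible with $p_{*}$ because $G$ acts linearly on the fibers.

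The identity $p_{*}\circ C=\mathrm{id}$ already holds at the cochain level: by the projection formula combined with the defining property $p_{*}\tau_{\mathcal{M}}=1$ of the Mathai--Quillen equivariant Thom form \cite{mat-qui},
$$p_{*}\bigl(p^{*}\alpha\wedge\tau_{\mathcal{M}}\bigr)=\alpha\wedge p_{*}\tau_{\mathcal{M}}=\alpha.$$
This immediately implies that $C$ is injective on cohomology, and it remains to show it is surjective, i.e.\ that $C\circ p_{*}=\mathrm{id}$ on $H^{*+r}_{G,c}(\mathcal{M})$.

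For this converse direction, the cleanest route is a Mayer--Vietoris reduction. Choose a $G$-invariant open cover $\{U_{i}\}$ of $N$ over which $\mathcal{M}$ is $G$-equivariantly trivial, isomorphic to $U_{i}\times V_{i}$ for a $G$-representation $V_{i}$. Naturality of both $C$ and $p_{*}$ under restriction to opens and their intersections yields a morphism between the Mayer--Vietoris long exact sequences for $N$ and for $\mathcal{M}$ in equivariant cohomology with compact supports. By the five lemma (applied inductively, and by passing to a direct limit if $N$ is not of finite type), it suffices to establish the isomorphism in the trivial case $\mathcal{M}=U\times V$. There, a K\"unneth-type splitting reduces the claim to the statement that $H^{s}_{G,c}(V)$ is one-dimensional in degree $s=r$ and zero otherwise, with the Thom form of $V$ as a canonical generator; this is the classical equivariant compactly supported de Rham cohomology of a linear $G$-representation, and the Thom map there is manifestly an isomorphism.

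The principal obstacle is not any single computation but the book-keeping needed to maintain $G$-equivariance and compactness of support simultaneously. The support control rests on the quoted property that $\tau_{\mathcal{M}}$ can be taken supported in an arbitrarily small disk bundle, ensuring that $C$ sends compactly supported forms to compactly supported forms; the $G$-equivariance of all auxiliary choices (fiber metric, connection, partitions of unity subordinate to $\{U_{i}\}$) is obtained by averaging over $G$.
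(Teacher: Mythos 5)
The paper offers no proof of Proposition \ref{thom}; it is quoted from Mathai--Quillen \cite{mat-qui}. Your architecture --- fiber integration $p_*$ as candidate inverse, the projection formula together with $p_*\tau_{\mathcal M}=1$ giving $p_*\circ C=\mathrm{id}$ at the level of forms (hence injectivity of $C$), and a Mayer--Vietoris reduction of the remaining identity $C\circ p_*=\mathrm{id}$ to the case of an equivariantly trivial bundle --- is the standard route and is sound in outline. Two points of book-keeping: for compact supports the Mayer--Vietoris sequence is covariant under open inclusions, so the naturality you need is compatibility of $C$ and $p_*$ with extension by zero rather than with restriction (both hold); and the five-lemma induction requires a good (e.g.\ finite or countable, locally finite) invariant cover, which exists after averaging a partition of unity, as you note.

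The genuine gap is in the local case. The assertion that $H^{s}_{G,c}(V)$ is one-dimensional in degree $s=r$ and zero otherwise is the \emph{non-equivariant} computation. Equivariantly, in the Cartan model, $H^{*}_{G,c}(V)\cong H^{*-r}_{G}(\mathrm{pt})=S[\mathfrak g^*]^G$ placed in shifted degrees: a free rank-one module on the Thom class, infinite-dimensional over $\mathbb R$ and nonzero in infinitely many degrees. Moreover the ``K\"unneth-type splitting'' you invoke is the equivariant K\"unneth theorem over $H^*_G(\mathrm{pt})$, which carries Tor-corrections in general and cannot simply be quoted; and computing $H^{*}_{G,c}(V)$ by the Thom isomorphism for $V\to\mathrm{pt}$ would be circular. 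The correct treatment of the trivial bundle $U\times V\to U$ avoids any cohomology computation: one writes an explicit $G$-equivariant chain homotopy, compatible with the equivariant differential $D$ and with compact supports after multiplication by the Thom form, between $C\circ p_*$ and the identity on $\Omega^{*}_{G,c}(U\times V)$. This is a fiberwise radial-contraction operator of exactly the kind the paper constructs in the proof of Proposition \ref{lamo} (the operator $I$ with $\lambda(x,y)-\lambda(x,0)=DI\lambda+ID\lambda$, followed by multiplication by $\tau$ to control supports). With the local step replaced by such a homotopy argument, your proof closes; as written, it bottoms out on a false statement.
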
   Let $Z$ be an  oriented $G$ manifold  and $s:N\hookrightarrow Z$ a $G$-stable oriented submanifold.
 Assume that $N$ has an action form
  with moment map  $\mu$ and that  $Z$ is equipped with an action form   such that the associated moment map $\mu_Z$ extends $\mu$. Thus $Z^0\cap N=N^0$.

We have used the Thom form in \cite{dpv34} in order to define a map $$s_!:H^*_{G,c}(N^0)\to H^*_{G,c}(Z^0).$$ In particular we can apply this when $Z= N\times M_B$,  $s:N\to N\times M_B$ is the embedding of the 0 section and $M_B$ is the  linear representation associated with some list $B$ of non zero vectors in $\Lambda$ equipped with an action form with the origin lying in $M_B^0$.  Then we  take on $N\times M_B $ the action form given by the sum of the action forms on $N$ and $M_B$ and we get the map
$$s_!:H^*_{G,c}(N^0)\to H^*_{G,c}((N\times M_B )^0).$$
We have
\begin{proposition}\label{lamo}
If $[\lambda]\in  H^*_{G,c}( (N\times M_B)^0)$, then:
$$s_!s^*([\lambda])=d_B[\lambda]. $$
\end{proposition}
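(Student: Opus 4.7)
The plan is to derive the identity from the general pushforward--pullback principle in equivariant cohomology: for a $G$-equivariant closed embedding $s\colon N\hookrightarrow Z$ with normal bundle $\nu$, the composition $s_!\circ s^*$ acts as multiplication by the equivariant Euler class $e_G(\nu)$. In our situation $Z=N\times M_B$, the normal bundle of the zero section is the trivial $G$-equivariant bundle $N\times M_B\to N$ with fiber $M_B=\bigoplus_{a\in B}L_a$, whose equivariant Euler class is exactly $\prod_{a\in B}a=d_B$. The content of the proof is to check that this standard identity survives in the particular de Rham model for $H^*_{G,c}((N\times M_B)^0)$ from \cite{dpv34}, in which representatives must be globally compactly supported and equivariantly closed only in a neighborhood of the zero locus.

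I would proceed in three steps. First, recall from Proposition \ref{thom} and the construction of $s_!$ that $s_!(\alpha)=p^*\alpha\wedge\tau_{M_B}$, where $p\colon N\times M_B\to N$ is the projection and $\tau_{M_B}$ is an equivariant Thom form of the trivial bundle, chosen supported in an arbitrarily small tubular neighborhood of the zero section. Hence $s_!s^*[\lambda]=[p^*s^*\lambda\wedge\tau_{M_B}]$. Second, use the radial contraction $h_t(n,m)=(n,tm)$, with $h_1=\mathrm{id}$, $h_0=s\circ p$. The equivariant homotopy formula produces $\eta=\int_0^1 h_t^*\iota_{\partial_t}\lambda\,dt$ with $\lambda-p^*s^*\lambda = D\eta+\epsilon$, where $\epsilon$ is supported on $\mathrm{supp}(D\lambda)$. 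Wedging with the equivariantly closed $\tau_{M_B}$ yields
\[
\lambda\wedge\tau_{M_B}-p^*s^*\lambda\wedge\tau_{M_B}=D(\eta\wedge\tau_{M_B})+\epsilon\wedge\tau_{M_B}.
\]
The fiber-compact support of $\tau_{M_B}$ combined with the compact support of $\lambda$ guarantees that $\eta\wedge\tau_{M_B}$ is globally compactly supported, and $\epsilon\wedge\tau_{M_B}$ vanishes near $(N\times M_B)^0$ since $D\lambda$ does. Third, since $M_B$ is $G$-equivariantly contractible to the origin, $H^*_G(M_B)\cong S[\mathfrak g^*]$, and $\tau_{M_B}$ and the polynomial form $x\mapsto d_B(x)$ represent the same class (namely $e_G(M_B)=d_B$, the former because its restriction to the origin is the equivariant Euler class by definition). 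Thus $\tau_{M_B}-d_B=D\beta$ for some equivariant form $\beta$ on $M_B$, possibly not compactly supported. Wedging with $\lambda$ gives
\[
\lambda\wedge\tau_{M_B}-d_B\cdot\lambda = D(\lambda\wedge\beta)\pm D\lambda\wedge\beta,
\]
where $\lambda\wedge\beta$ is compactly supported (because $\lambda$ is) and $D\lambda\wedge\beta$ vanishes near $(N\times M_B)^0$. Concatenating the three steps in $H^*_{G,c}((N\times M_B)^0)$ yields $s_!s^*[\lambda]=d_B[\lambda]$.

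The main obstacle is the interplay of the two support conditions. The primitive $\beta$ of Step~3 is not compactly supported, but this is rescued by the fact that $\lambda$ is; similarly, the generator of $h_t$ is singular at $t=0$, yet $\eta\wedge\tau_{M_B}$ is unambiguously defined and globally compactly supported because of the fiber-compact support of $\tau_{M_B}$. One must therefore verify throughout Steps~2 and~3 that every primitive and every error form satisfies the admissibility conditions of the de Rham model in \cite{dpv34}, namely compact support on $N\times M_B$ together with $D$-vanishing in some neighborhood of $(N\times M_B)^0$.
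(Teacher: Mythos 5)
Your argument is correct and follows essentially the same route as the paper: the radial contraction homotopy formula on $N\times M_B$, multiplication by a Thom form supported near the zero section (with the same support analysis ensuring the error term stays away from $(N\times M_B)^0$), and the identification of the Thom class with the Euler class $d_B$ via restriction to the origin. Your Step 3, which makes the last identification explicit by writing $\tau_{M_B}-d_B=D\beta$ and wedging with the compactly supported $\lambda$, is a legitimate fleshing-out of a step the paper leaves terse.
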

 \begin{proof}
 We first  want a Poincar\'e Lemma for a $G$ manifold $N$ and a   vector space $M_B$ with $G$ action.

Consider the map $q:N\times M_B\times[0,1]\to N\times M_B,\ q(x,y,t)=(x,ty)$  and the maps $i_t: N\times M_B\to N\times M_B\times[0,1], i_t(x,y)=(x,y,t).$

Given an equivariant form $\lambda=\lambda(x,y),\ x\in N,y\in M_B$ on $ N\times M_B$,  define the forms $\tilde \lambda,\bar  \lambda$  on            $N\times M_B\times[0,1]$    by
$$q^*\lambda= \tilde \lambda +dt\wedge \bar\lambda $$
where $ \tilde \lambda$  does not contain $dt$ and can be thought of as the form $q_t^*\lambda$, and now $q_t:(x,y)\to (x,ty)$.
Set $$I(\lambda):=\int_0^1 i_t^*\bar\lambda dt.$$
We claim that we have the homotopy formula
\begin{equation}\label{homo}
\lambda(x,y)-\lambda(x,0)=  D I\lambda+ID\lambda .
\end{equation}
In fact
$$DI(\lambda):=\int_0^1 i_t^*D\bar\lambda dt,\   q^*D\lambda=D \tilde \lambda -dt\wedge D\bar\lambda .$$ Write $D \tilde \lambda=\omega +dt\wedge \eta,$ so that $ID\lambda= \int_0^1 i_t^* ( \eta - D\bar\lambda) dt$
gives
$$ DI(\lambda)=-   ID \lambda+\int_0^1 i_t^* \eta dt.$$ If we think of $ \tilde \lambda$ as a form on $N\times M_B$ depending on $t$, we see that $\eta=\frac{d}{dt} \tilde \lambda =\frac{d}{dt} q_t^*\lambda$. It follows that $$  \int_0^1 i_t^* \eta dt= q_1^*\lambda-q_0^*\lambda . $$

We now multiply Equation (\ref{homo}) above by a Thom form $\tau$ for the trivial bundle $p:N\times M_B\to N$ which is the pull back of a Thom form $\tau_0$ on $M_B$ under the projection $N\times M_B\to M_B$.  We may assume the support of $\tau$ as close to $N$ as we wish, that is in $N\times B_\epsilon$  with $ B_\epsilon$ a ball of radius $\epsilon$ centered in the origin. In particular, take a form $\lambda$ such that $D\lambda$ has support  $K$  disjoint   from  $(N\times M_B)^0$.  For such a form, the support of  $ ID\lambda$  is contained in the set of points  $(x,y)$ such that the segment $(x,ty),\ t\in[0,1]$, intersects  $K$.  The support of  $\tau  ID\lambda$  is contained in the previous set of points  $(x,y)$ but with the further requirement that $y\in B_\epsilon$.

Then, if $\epsilon$ is small, we can make sure that the set $\cup_{t\in [0,1]  } t(K\cap (N\times B_\epsilon))$ is still disjoint from  $(N\times M_B)^0$.

We deduce that $$\tau  \lambda(x,y)-\tau \lambda(x,0)=  D\tau I\lambda+\tau ID\lambda ,$$
the forms in this equality have all compact support and moreover  the support of $\tau ID\lambda $  is disjoint from   $(N\times M_B)^0$.

This implies that $[\tau  \lambda(x,y)]=[\tau \lambda(x,0)]$ in the cohomology $H^*_{G,c}((N\times M_B)^0)$.

Now  by definition  $\lambda(x,0)=p^*s^*\lambda,$ so $ [\tau \lambda(x,0)]=s_!s^*[\lambda].$

 The inclusion of the origin   gives an algebra isomorphism between $H^*_G(M_B)$ and  $H^*_G(pt)=S[\mathfrak g^*]$ and the image of  the class of $\tau_0$ is the Euler class of $M_B$ which is $d_B$.    Thus $[\tau  \lambda]=[d_B\lambda]$.
  \end{proof}
 \begin{remark}\label{ciccio} Notice that, if we assume that the moment map on $M_B$ equals $0$, then $(N\times M_B)^0=N^0\times M_B$ and the map $s_!$ is just the Thom isomorphism between $H^*_{G,c}(N^0)$ and $H^*_{G,c}(N^0\times M_B)$.\end{remark}

 Let us now go back to our computations.  The projection $p:T_G^*M_X^{fin}\to M_X^{fin}$ is a real vector bundle of rank $2|X|-s$ so that, applying Proposition \ref{thom}, we get  $H^*_{G,c}(T_G^*M_X^{fin})=H^{*+2|X|-s}_{G,c}(M_X^{fin})$. Thus putting together this with \eqref{dual}, we get
\begin{proposition}\label{shift}
As  a graded $S[\mathfrak g^*]$-module, \begin{equation}
\label{edd}H^*_{G,c}(T_G^*M_X^{fin})\simeq D^{4|X|-2s-*}(X).
\end{equation} In particular $T_G^*M_X^{fin}$ has no equivariant odd cohomology with compact supports.
\end{proposition}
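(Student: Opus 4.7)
The plan is to deduce the proposition by chaining together three facts that have already been established in the paper: the Thom isomorphism (Proposition \ref{thom}), the identification \eqref{dual} of $H^*_{G,c}(M_X^{fin})$ with a shifted copy of $D(X)$, and the $S[\mathfrak{g}^*]$-linearity of Poincar\'e duality (Proposition \ref{podu}).

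First I would observe that on the finite stabilizer locus the map $p:T_G^*M_X^{fin}\to M_X^{fin}$ is a genuine $G$-equivariant real vector bundle. Indeed, by definition every $x\in M_X^{fin}$ has finite stabilizer, so the tangent space to the $G$-orbit through $x$ has constant (real) dimension $s=\dim G$, and its annihilator in $T^*_xM_X$ is a subspace of constant rank $2|X|-s$. Applying Proposition \ref{thom} to this bundle yields an isomorphism
\begin{equation*}
C: H^h_{G,c}(M_X^{fin})\xrightarrow{\;\sim\;} H^{h+2|X|-s}_{G,c}(T_G^*M_X^{fin}),
\end{equation*}
i.e. $H^h_{G,c}(T_G^*M_X^{fin})\cong H^{h-(2|X|-s)}_{G,c}(M_X^{fin})$. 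Substituting the formula \eqref{dual} gives
\begin{equation*}
H^h_{G,c}(T_G^*M_X^{fin})\cong D^{2|X|-s-(h-(2|X|-s))}(X)=D^{4|X|-2s-h}(X),
\end{equation*}
which is the required identification of underlying graded vector spaces. The vanishing in odd degree then follows automatically from the convention $D(X)^{2i+1}=\{0\}$.

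It remains to check that the combined isomorphism is $S[\mathfrak{g}^*]$-linear. The Thom isomorphism $C(\alpha)=p^*\alpha\wedge\tau_{\mathcal{M}}$ is cup product with a fixed equivariant cohomology class, hence $H^*_G$-linear, and therefore $S[\mathfrak{g}^*]$-linear via the Chern--Weil morphism. Poincar\'e duality \eqref{dual} is $S[\mathfrak{g}^*]^G$-linear by Proposition \ref{podu}, and since $G$ is a torus we have $S[\mathfrak{g}^*]^G=S[\mathfrak{g}^*]$, so the composite is $S[\mathfrak{g}^*]$-linear, as desired.

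There is no real obstacle here; the proposition is essentially the concatenation of previously established isomorphisms. The only point that requires a moment's care is the bookkeeping of the degree shifts (one shift of $2|X|-s$ from Thom and one from Poincar\'e duality, combining to $4|X|-2s$), and verifying that the hypothesis of Proposition \ref{thom} genuinely applies to $T_G^*M_X^{fin}\to M_X^{fin}$, which it does precisely because we have restricted to the finite stabilizer locus.
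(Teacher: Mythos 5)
Your proof is correct and follows essentially the same route as the paper: the paper likewise combines the Thom isomorphism for the rank-$(2|X|-s)$ bundle $T_G^*M_X^{fin}\to M_X^{fin}$ with the Poincar\'e duality identification \eqref{dual}, and handles the $S[\mathfrak g^*]$-module structure via Proposition \ref{podu} and Remark \ref{ladua}. Your degree bookkeeping and your verification that $T_G^*M_X^{fin}\to M_X^{fin}$ is an honest equivariant vector bundle on the finite-stabilizer locus are both fine.
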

\begin{proof}  We apply Proposition \ref{podu} and deduce that $H^*_{G,c}(T_G^*M_X^{fin})$  is isomorphic to the dual of  $H^*_{G }(T_G^*M_X^{fin})$ as $S[\mathfrak g^*]$-modules, where the dual structure is given by $\langle  a \phi, u\rangle= \langle   \phi, a u\rangle,a \in S[\mathfrak g^*],$ $
\phi\in  H^*_{G }(T_G^*M_X^{fin})^*$, $ u\in  H^*_{G }(T_G^*M_X^{fin})$. The statement now follows from Remark \ref{ladua}.
\end{proof}

%\newpage
\subsection{The equivariant cohomology of $T^*_GM_{X,\geq i}$}
It is now interesting to interpret formula \eqref{edd} via the theory of the infinitesimal index.  To do this, we need to recall a few facts.
As we have already remarked, the action of $G$ on $T_G^*M_X^{fin}$ is essentially free, so, denoting by $Q$ the quotient $T_G^*M_X^{fin}/G$, we can take an equivariant $\mathfrak g$-valued curvature form $R $   for the map $T_G^*M_X^{fin}\to Q$.

We have the Chern-Weil map $c:S[\mathfrak g^*]\to H^*_{G}(T_G^*M_X^{fin})$ defined by $p \mapsto [p(R)]$.

  Take a closed equivariant form with compact support $\gamma$  on $Q$.  We can apply the Theory of the infinitesimal index  to the class $[\gamma]\in H^*_{c}(Q)\simeq  H^*_{G,c}(T_G^*M_X^{fin})$.
By Proposition 4.20 of \cite{dpv34}    the infinitesimal index is given by the polynomial density on $\mathfrak g^*$
\begin{equation}\label{linind}(\int_Q\gamma e^{i\langle R,\xi\rangle}) d\xi.\end{equation} We have
\begin{theorem}\label{finitu} The map ${\rm infdex}$  is a graded isomorphism as $S[\mathfrak g^*]$-modules  of $H^*_{G,c}(T_G^*M_X^{fin})$ onto $D(X)$.
 \end{theorem}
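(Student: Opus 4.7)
My approach has three parts. Since $G$ acts with finite stabilizers on $T_G^*M_X^{\rm fin}$, the quotient $Q:=T_G^*M_X^{\rm fin}/G$ is a connected orientable open manifold of real dimension $4|X|-2s$, and $H^*_{G,c}(T_G^*M_X^{\rm fin})\cong H^*_c(Q)$. Formula \eqref{linind} gives $\infdex([\gamma])=P_\gamma(\xi)\,d\xi$ with $P_\gamma(\xi)=\int_Q\gamma\,e^{i\langle R,\xi\rangle}$ a polynomial in $\xi\in\mathfrak g^*$; a degree count (de Rham degree of $\gamma$ plus twice the $\xi$-degree equals $\dim Q$) matches the grading shift $k\mapsto 4|X|-2s-k$ of Proposition \ref{shift}. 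For $a\in\mathfrak g^*$ the identity $\partial_a e^{i\langle R,\xi\rangle}=i\,a(R)\,e^{i\langle R,\xi\rangle}$ yields $\partial_a P_\gamma=i\,P_{a\cdot\gamma}$, with $a\cdot\gamma$ the Chern--Weil action; iterating, $\partial_Y P_\gamma=i^{|Y|}P_{d_Y\gamma}$ for every sublist $Y$. Up to a harmless degree-dependent factor of $i$, this is the $S[\mathfrak g^*]$-equivariance of $\infdex$.

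Combining Proposition \ref{eqcoh} with Corollary \ref{finit}, the Chern--Weil action of $S[\mathfrak g^*]$ on $H^*_{G,c}(T_G^*M_X^{\rm fin})$ factors through $H^*_G(T_G^*M_X^{\rm fin})=H^*_G(M_X^{\rm fin})=S[\mathfrak g^*]/I_X$. For every cocircuit $Y$ we have $d_Y\in I_X$, so $d_Y\cdot[\gamma]=0$, which forces $\partial_Y P_\gamma=0$. Hence $P_\gamma\in D(X)$, and $\infdex$ gives a graded $S[\mathfrak g^*]$-equivariant map $H^*_{G,c}(T_G^*M_X^{\rm fin})\to D(X)$.

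Proposition \ref{shift} already supplies an abstract graded isomorphism $H^*_{G,c}(T_G^*M_X^{\rm fin})\cong D(X)$, so the two sides have equal finite graded dimension and bijectivity of $\infdex$ reduces to injectivity. The essential point is that the $S[\mathfrak g^*]$-module $D(X)$ has essential socle (= the one-dimensional space of constants): given $0\ne f\in D(X)$ of top polynomial degree $d$, pick a constant-coefficient monomial $\partial^\alpha$ of order $d$ sending the top homogeneous component of $f$ to a nonzero scalar; since $\partial^\alpha$ kills all terms of degree $<d$, $\partial^\alpha f\in D(X)$ is a nonzero constant, so any nonzero $S[\mathfrak g^*]$-submodule of $D(X)$ contains $\mathbb R\cdot 1$. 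Via Proposition \ref{shift}, the socle of $H^*_{G,c}(T_G^*M_X^{\rm fin})$ sits in top cohomological degree $4|X|-2s$, equal to $H^{4|X|-2s}_c(Q)\cong\mathbb R$. Choosing a representative given by a compactly supported top form $\alpha$ on $Q$ with $\int_Q\alpha\ne 0$, formula \eqref{linind} collapses to $P_\alpha(\xi)=\int_Q\alpha\ne 0$, since $\alpha$ already has top de Rham degree and only the constant term of $e^{i\langle R,\xi\rangle}$ contributes; this is a nonzero element of the socle $D^0(X)\cong\mathbb R$ of $D(X)$. Therefore $\ker\infdex=0$, and $\infdex$ is an isomorphism. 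The principal obstacle is the essential-socle step, because $D^*(X)=S[\mathfrak g^*]/I_X$ is in general not Gorenstein (e.g.\ $X=(e_1,e_2,e_1+e_2)$ gives $D^*(X)=\mathbb R[u_1,u_2]/\mathfrak m^2$ with two-dimensional socle), so $D(X)$ need not be cyclic as an $S[\mathfrak g^*]$-module; nevertheless the socle of $D(X)$ itself is always the one-dimensional space of constants, and the elementary ``differentiate-down-to-a-constant'' argument shows it essential, which is all the proof requires.
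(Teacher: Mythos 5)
Your argument is correct, but the way you establish bijectivity is genuinely different from the paper's. The paper observes that the Taylor coefficients of $\infdex(\gamma)$ at $\xi=0$ compute the Poincar\'e pairings: $\int_Q\gamma\,p(R)=\bigl(p(\partial)\,\infdex(\gamma)\bigr)_{|\xi=0}$ for every $p\in S[\mathfrak g^*]$. Since the Chern--Weil map $c$ is surjective with kernel $I_X$ (Theorem \ref{ilpr}), this identifies $\infdex$ outright with the Poincar\'e duality isomorphism $H^*_{G,c}(T^*_GM_X^{fin})\to \bigl(S[\mathfrak g^*]/I_X\bigr)^*=D(X)$, so injectivity and surjectivity drop out simultaneously from nondegeneracy of the pairing. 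You instead use only the $S[\mathfrak g^*]$-module property of $\infdex$: after checking the image lands in $D(X)$ (which you and the paper do in essentially the same way, via $d_Y\in I_X$ for cocircuits $Y$), you invoke the dimension count of Proposition \ref{shift} to reduce to injectivity, then the essential-socle argument --- any nonzero submodule of $D(X)$, hence of $H^*_{G,c}(T^*_GM_X^{fin})$, contains the one-dimensional top piece --- to reduce injectivity to the single computation $\infdex([\alpha])=\int_Q\alpha\neq 0$ on a generator of $H^{4|X|-2s}_c(Q)$. That socle argument is sound (and your caveat that $S[\mathfrak g^*]/I_X$ need not be Gorenstein, while $D(X)$ always has one-dimensional socle, is exactly the right point to worry about). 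What the paper's route buys is an explicit identification of $\infdex$ with the duality pairing, which is conceptually sharper; what yours buys is that only one integral over $Q$ ever needs to be evaluated, with everything else pushed into module theory. Two cosmetic points: $Q$ is in general an orbifold rather than a manifold (harmless over $\mathbb R$, and the paper's Proposition \ref{podu} already works in this generality), and the ``degree-dependent factor of $i$'' you flag is a normalization of the Chern--Weil map that the paper also suppresses; neither affects the argument.
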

 \begin{proof} Given $p\in S[\mathfrak g^*]$, it defines at the same time a cohomology class $c(p)\in  H^*_{G }(T_G^*M_X^{fin}) $ (given by the Chern--Weil morphism) and also  a differential operator with constant coefficients on $\mathfrak g$, we shall write it as $p(\partial)$.
 Now notice that the  Poincar\'e  duality pairing $([\gamma],c(p))$ is given by
 $$\int_Q\gamma p(R)=\Big (\int_Q\gamma p(R) e^{i\langle R,\xi\rangle}\Big )_{|\xi=0}= (p(\partial) {\rm infdex}(\gamma))_{|\xi=0}.$$
 By Theorem \ref{ilpr} we have  in our case that the Chern-Weil map  $c:S[\mathfrak g^*]\to   H^*_{G }(T_G^*M_X^{fin}) $ is surjective with kernel $I_X$. From this and the previous considerations everything follows.\end{proof}

 % \part{Equivariant differential cohomology}

In order to proceed we need to recall a few definitions.
For a rational subspace $\underline s$,
we may consider the subspace  $M_{\underline s}:=\oplus_{a\in X\cap \underline s}L_a$ of $M_X$,  we also denote by   $G_{\underline s}$   the subgroup of $G$ joint kernel of the   elements in  $X\cap \underline s$. The group $ G_{\underline s}$  acts trivially on $M_{\underline s}$ inducing an action  of $G/G_{\underline s}$.
\begin{definition} We define the set $M_{\underline s}^f$  to be the open set of $M_{\underline s}$ where $G/G_{\underline s}$ acts with  finite stabilizers.
\end{definition}
 Thus $$H^*_{G,c} (M_{\underline s}^f)=S[\mathfrak g^*]\otimes_{S[(\mathfrak g / \mathfrak g_{\underline s} )^*]}H^{*}_{ G/G_{\underline s},c} (M_{\underline s}^f)$$
where $\mathfrak g^*$ is in degree 2.
In particular, by    Proposition \ref{shift}, we deduce that $H_{G,c}^{2i+1}( T^*_G M_{\underline s}^f)=0$.

Now set $\tilde T^*_G M_{\underline s}^f:=T_G^*M_X|_{ M_{\underline s}^f}$,  the restriction of  $T_G^*M_X$ to $M_{\underline s}^f$.
We see that  $\tilde T^*_G M_{\underline s}^f= T^*_G M_{\underline s}^f\times M_{X\setminus\underline s}^*$,
  so we have a Thom  isomorphism
   $$C_{\underline s}:  H_{G,c}^{2i}(  T^*_{ G }  M_{\underline s}^f)\to H_{G,c}^{2(i+|X\setminus\underline s|)}(\tilde T^*_G M_{\underline s}^f),\quad H_{G,c}^{2i+1}(\tilde T^*_G M_{\underline s}^f)=0.$$
Choose $0\leq i\leq s$.
We pass now to study the $G$-invariant open subspace  $M_{X,\geq i}$ of $M$.
The set  $M_{X,\geq i+1}$ is open in $M_{X,\geq i}$ with complement the  set $M_{=i}$,  disjoint union of the sets $M_{\underline s}^f$ with  $\underline s \in \mathcal S_X{(i)}$. Denote by $\tilde T_G^* M_{=i}$ the restriction of $T^*_G M$ to $M_{=i}$,  disjoint union of the sets
$\tilde T^*_G M_{\underline s}^f.$
Denote $$j:  M_{X,\geq i+1}\to M_{X,\geq i}\quad\text{ the open inclusion}$$ $$e: \tilde T_G^* M_{=i}=\cup_{\underline s  } \tilde T^*_G M_{\underline s}^f\to T_G^*M_{X,\geq i}\quad\text{  the closed embedding.}$$ Let $C_i$ be the Thom isomorphism from $H_{G,c}^{2i}( T^*_G M_{=i})$ to $H_{G,c}^{2(i+|X\setminus\underline s|)}(\tilde T^*_G M_{=i})$, the direct sum of the Thom isomorphisms $C_{\underline s}$.

\begin{theorem}\label{Tmenos} For each $0\leq i\leq s-1$,
\begin{enumerate}[a)]\item For each $h\geq 0$, $H_{G,c}^{2h+1}(T^*_GM_{X,\geq i})=0$.
\item
For each $h\geq 1$, the following sequence is exact
\begin{equation}\label{lasecondina}0\to H_{G,c}^{ 2h}(T_G^*M_{X,\geq i+1})\stackrel{j_*}\to H_{G,c}^{ 2h}(T_G^*M_{X,\geq i})\stackrel{C_{i}^{-1}e^*}\to\oplus_{\underline s\in {\mathcal S}_X{(i)}}  H_{G,c}^{ 2h-2|X\setminus\underline s|}(T_G^*M_{\underline s}^f)\to 0 .\end{equation}
\end{enumerate}
\end{theorem}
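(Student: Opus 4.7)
The plan is to induct downward on $i$, using the long exact sequence of $G$-equivariant compactly supported cohomology attached to the open-closed decomposition of $T^*_G M_{X,\geq i}$ into $T^*_G M_{X,\geq i+1}$ (open) and $\tilde T^*_G M_{=i}$ (closed). In the de Rham model of \cite{dpv34} this sequence reads
$$\cdots \to H^k_{G,c}(T^*_G M_{X,\geq i+1}) \stackrel{j_*}{\to} H^k_{G,c}(T^*_G M_{X,\geq i}) \stackrel{e^*}{\to} H^k_{G,c}(\tilde T^*_G M_{=i}) \stackrel{\partial}{\to} H^{k+1}_{G,c}(T^*_G M_{X,\geq i+1}) \to \cdots.$$
The whole proof reduces to showing that all three terms vanish in odd degree: the sequence will then collapse in odd degree to give (a), and in even degree to the short exact sequence of (b).

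First I handle the closed piece. Since $M_{=i}=\coprod_{\underline s\in \mathcal S_X(i)} M_{\underline s}^f$, we have $H^k_{G,c}(\tilde T^*_G M_{=i})=\oplus_{\underline s\in\mathcal S_X(i)} H^k_{G,c}(\tilde T^*_G M_{\underline s}^f)$, and the Thom isomorphism $C_{\underline s}$ of Proposition \ref{thom} identifies each summand with $H^{k-2|X\setminus \underline s|}_{G,c}(T^*_G M_{\underline s}^f)$. The vanishing $H^{2h+1}_{G,c}(T^*_G M_{\underline s}^f)=0$ is already recorded in the paragraph preceding the theorem, as a consequence of Proposition \ref{shift} applied to the torus $G/G_{\underline s}$ acting on $M_{\underline s}^f$ with finite stabilizers, combined with the base-change formula $H^*_{G,c}(M_{\underline s}^f)=S[\mathfrak g^*]\otimes_{S[(\mathfrak g/\mathfrak g_{\underline s})^*]}H^*_{G/G_{\underline s},c}(M_{\underline s}^f)$. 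Since the Thom shift $2|X\setminus \underline s|$ is even, the closed piece itself has no odd cohomology.

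I now induct downward on $i$. At $i=s-1$, the open piece $T^*_G M_{X,\geq i+1} = T^*_G M_X^{fin}$ has vanishing odd equivariant compactly supported cohomology directly by Proposition \ref{shift}. For the induction step, assume (a) holds at level $i+1$; together with the previous paragraph, both outer terms of the displayed long exact sequence vanish in every odd degree. This forces $H^{2h+1}_{G,c}(T^*_G M_{X,\geq i})=0$ for all $h\geq 0$, establishing (a) at level $i$, while the remaining even-degree segment is exactly the short exact sequence asserted in (b), after rewriting the third term via $C_i^{-1}$.

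The one non-routine ingredient is the existence of the open-closed long exact sequence in the de Rham model of $H^*_{G,c}$; once this is extracted from \cite{dpv34}, the rest is formal manipulation with Thom isomorphisms and a careful bookkeeping of odd-degree vanishing.
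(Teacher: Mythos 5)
Your proof is correct and follows essentially the same route as the paper: downward induction on $i$, the long exact sequence of compactly supported equivariant cohomology for the open--closed decomposition of $T^*_GM_{X,\geq i}$, and the vanishing of odd cohomology of the closed stratum via the (even-degree) Thom isomorphisms and Proposition \ref{shift}. No gaps.
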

\begin{proof}
Since $M_{X,\geq s}= M^{fin}_X$, we can assume by induction on $s-i$, that {\it a)} holds for each $j>i$. Also since $M_{=i}$ is the disjoint union of the spaces $M_{\underline s}^f$ which have no odd equivariant cohomology with compact supports,
we get that $H_{G,c}^{2i+ 1}(T_G^*M_{=i})=0$ for each $0\leq i\leq s-1$. Using this fact, both statements follow immediately from the long exact sequence of equivariant cohomology with compact supports associated to $j,e$.
\end{proof}

Let us now  make  a simple but important remark.
\begin{lemma}\label{laquetio} Let $\underline s\in \mathcal S_X(j)$ with $j<k$. The element $d_{X\setminus s}\in S[\mathfrak g^*]$ lies in the annihilator of
$H^*_{G,c}(T_G^*M_{X,\geq k})$.
\end{lemma}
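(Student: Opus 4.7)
The plan is to reduce annihilation on cohomology with compact supports to vanishing in ordinary equivariant cohomology, exploiting the standard module structure $H^*_G(Y)\otimes H^*_{G,c}(Y)\to H^*_{G,c}(Y)$ through which $S[\mathfrak g^*]=H^*_G(\mathrm{pt})$ acts. Concretely, if a Chern--Weil class $c(p)$ is zero in $H^*_G(T_G^*M_{X,\geq k})$, then for any representative $\alpha$ of a class in $H^*_{G,c}(T_G^*M_{X,\geq k})$ one can write $p(R)=D\gamma$ on the ambient manifold $N=T^*M_{X,\geq k}$, and then $\alpha\wedge p(R)=\pm D(\alpha\wedge\gamma)\mp D\alpha\wedge\gamma$, where the last term is supported away from $T_G^*M_{X,\geq k}$ (since $D\alpha$ vanishes near this set), so $[\alpha\wedge p(R)]=0$ in the de Rham model of $H^*_{G,c}$. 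Hence it suffices to show that $d_{X\setminus\underline s}$ maps to $0$ in $H^*_G(T_G^*M_{X,\geq k})$.

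By Proposition \ref{eqcoh} combined with Corollary \ref{finit}, the natural map $S[\mathfrak g^*]\to H^*_G(T_G^*M_{X,\geq k})$ is surjective with kernel exactly $I_{k-1}=\sum_{\underline t\in\mathcal S_X(k-1)}(d_{X\setminus\underline t})$. So the problem reduces to showing $d_{X\setminus\underline s}\in I_{k-1}$ whenever $\underline s\in\mathcal S_X(j)$ with $j<k$.

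To establish this containment, I claim that every rational subspace $\underline s$ of dimension $j\leq k-1$ is contained in some rational subspace $\underline t\in\mathcal S_X(k-1)$. If $j=k-1$, take $\underline t=\underline s$. Otherwise, since $X$ spans $\mathfrak g^*$ and $j<k-1\leq s-1$, we can iteratively pick $a_1,\dots,a_{k-1-j}\in X$ outside the current span, and set $\underline t:=\langle X\cap\underline s,a_1,\dots,a_{k-1-j}\rangle$; this subspace has dimension $k-1$ and is rational by construction. Given such a $\underline t$, the inclusion $\underline s\subset\underline t$ yields $X\setminus\underline t\subset X\setminus\underline s$, so $d_{X\setminus\underline t}$ divides $d_{X\setminus\underline s}$ in $S[\mathfrak g^*]$ and therefore $d_{X\setminus\underline s}\in(d_{X\setminus\underline t})\subset I_{k-1}$.

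The only non-routine step is the extension argument for rational subspaces, but it is quite elementary given our standing hypothesis that $X$ spans $\mathfrak g^*$. Everything else is a formal consequence of results already proved in the paper and the compatibility between the module structures on ordinary and compactly supported equivariant cohomology.
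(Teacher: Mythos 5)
Your proof is correct, and its overall strategy coincides with the paper's: reduce the action of $d_{X\setminus\underline s}$ on $H^*_{G,c}(T_G^*M_{X,\geq k})$ to its image in ordinary equivariant cohomology via the module structure (your explicit de Rham manipulation with $\alpha\wedge D\gamma=\pm\bigl(D(\alpha\wedge\gamma)-D\alpha\wedge\gamma\bigr)$ is just an unwinding of the module structure the paper invokes in one line). Where you diverge is in how the vanishing in ordinary cohomology is obtained. The paper observes that $M_{X,\geq k}\subset M_X\setminus M_{X\cap\underline s}$ (a point of $M_{X\cap\underline s}$ has orbit of dimension at most $j<k$) and then quotes Lemma \ref{facil}, which kills $d_{X\setminus\underline s}$ already in $H^*_G(M_X\setminus M_{X\cap\underline s})$ before restricting; this needs only the elementary Euler-class computation. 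You instead invoke the full computation $H^*_G(M_{X,\geq k})=S[\mathfrak g^*]/I_{k-1}$ of Corollary \ref{finit} (hence Theorem \ref{ilpr}) and verify the ideal membership $d_{X\setminus\underline s}\in I_{k-1}$ by extending $\underline s$ to a rational subspace $\underline t$ of dimension $k-1$ using vectors of $X$; that extension argument is sound (each $\langle X\cap\underline s,a_1,\dots\rangle$ is rational since it is spanned by elements of $X$ it contains, and $X$ spanning $\mathfrak g^*$ guarantees the vectors exist as long as $k-1\leq s$, the only case where the statement has content). So your route is heavier than necessary --- it routes through the hardest theorem of Section 3 where a divisibility observation suffices --- but it is complete and yields the same conclusion.
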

\begin{proof} $H^*_{G,c}(T_G^*M_{X,\geq k})$ is a module over $H^*_{ G}(T_G^*M_{X,\geq k})$  and hence  also  over $H^*_{ G}( M_{X,\geq k})$. Thus this lemma follows from Lemma \ref{facil}.\end{proof}

Let us now split $X=A\cup B$ and $M_X=M_A\oplus M_B$. Then $$T^*M_X=M_X\times M_X^*=T^*M_A\times T^*M_B=T^*M_A\times M_B^*\times M_B.$$
Consider the inclusions
$$\begin{CD}T^*M_A @>s>>T^*M_A\times M_B^*@>i>>T^*M_A\times M_B^*\times M_B\end{CD}$$
Each is the zero section of a trivial bundle. The moment map for $T^*M_X$ restrict to $T^*M_A\times M_B^*$ to the moment map  of the factor $T^*M_A$. Hence setting $\tilde T^*_GM_A:=T^*_GM_A\times M_B^*$ we obtain the inclusions
$$\begin{CD}T_G^*M_A @>s>>\tilde T^*_GM_A @>i>>T_G^*M_X\end{CD}.$$
To the inclusion $i$ we can apply Proposition \ref{lamo} and to the inclusion $s$ also Remark \ref{ciccio}.

 In particular,  we get a Thom  isomorphism $$s_!=C_{M^*_B}: H_{G,c}^*(T_G ^* M_A)\to H_{G,c}^{*+ 2|B|}(\tilde T_G^*M_A)\cong  H_{G,c}^{*+ 2|B|}(T_G^*M_A\times  M_B^*) $$
  and  a homomorphism   $i_!:H_{G,c}^*(\tilde T^*_G  M_A)\to H_{G,c}^*(T^*_GM_X ).$
%Combining these 3 maps,   we claim that

\begin{proposition}\label{IMC} Take $\sigma\in H_{G,c}^*(T_G^*M_X )$,
then  $(i\circ s)_!C_{M^*_B}^{-1} i^*(\sigma)= d_B  \sigma$.

\end{proposition}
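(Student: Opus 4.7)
The plan is to reduce Proposition \ref{IMC} to a combination of the functoriality of the Thom pushforward and Proposition \ref{lamo} applied to the inclusion $i$. The key observation is that $s_!$ and $C_{M_B^*}$ are literally the same map, so they cancel, and we are left with $i_! \circ i^*$, to which Proposition \ref{lamo} applies directly.

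First, I would use functoriality of the pushforward under composition of closed embeddings to write $(i \circ s)_! = i_! \circ s_!$. This is a standard fact for Thom pushforwards along zero sections of equivariant vector bundles, and both $s$ and $i$ are of this type by construction. Next, I would invoke Remark \ref{ciccio} applied to $s$: since the action form on the $M_B^*$-factor (viewed as the zero section of $T^*M_B$) makes the moment map vanish on $M_B^*$, we have $(N\times M_B^*)^0 = N^0 \times M_B^*$ with $N = T_G^*M_A$, and hence $s_!$ coincides with the Thom isomorphism $C_{M_B^*}$ of the trivial $M_B^*$-bundle over $T_G^*M_A$. Combining these two facts,
\[
(i \circ s)_! \, C_{M_B^*}^{-1} \, i^*(\sigma) \;=\; i_! \, s_! \, C_{M_B^*}^{-1} \, i^*(\sigma) \;=\; i_! \, i^*(\sigma).
\]

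Finally, I would apply Proposition \ref{lamo} to the inclusion $i$. Under the identification $T_G^*M_X = (\tilde T_G^*M_A) \times M_B$, the map $i$ is the zero section of the trivial $M_B$-bundle over $\tilde T_G^*M_A$, and the action form on $T^*M_X$ is the sum of the restricted action form on $\tilde T_G^*M_A$ and the action form on $M_B$ (with $0 \in M_B^0$). The hypotheses of Proposition \ref{lamo} are therefore met, yielding $i_! \, i^*(\sigma) = d_B \, \sigma$, which combined with the previous display proves the claim.

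The only point requiring a modicum of care is verifying that the moment map decomposition on $T^*M_X$ matches the setup of Proposition \ref{lamo} at the inclusion $i$ (and of Remark \ref{ciccio} at the inclusion $s$), but this is immediate from the description given just before the proposition: the canonical one-form on $T^*M_X$ restricted to the slice $\tilde T_G^*M_A = T_G^*M_A \times M_B^*$ has vanishing contribution from the $M_B^*$ coordinate (since the $M_B$ coordinate is zero there), so the induced moment map on $\tilde T_G^*M_A$ agrees with the one pulled back from $T_G^*M_A$. Thus the argument is essentially a bookkeeping of identifications, with no real analytic content beyond Proposition \ref{lamo} itself.
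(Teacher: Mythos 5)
Your proposal is correct and follows essentially the same route as the paper: the paper's proof likewise observes that $(i\circ s)_!C_{M^*_B}^{-1}=i_!$ (using $s_!=C_{M^*_B}$, which is Remark \ref{ciccio} applied to $s$, together with functoriality of the pushforward) and then concludes by Proposition \ref{lamo} applied to $i$. Your version merely spells out the intermediate identifications that the paper compresses into one line.
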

\begin{proof} We first  observe that $(i\circ  s)_!C_{M^*_B}^{-1}=  i_!  $.   So $(i\circ s)_!C_{M^*_B}^{-1} i^*(\sigma)=i_!i^*(\sigma)$ and the statement follows from Proposition \ref{lamo}.
   \end{proof}

\begin{corollary}\label{nablaindex}
Take
$\sigma\in H_{G,c}^*(T_G^*M_X )$.
Let $\sigma_0=C_{M^*_B}^{-1} i^*(\sigma)\in H_{G,c}^*(T^*_G M_A)$.
Then, we have the equality of distributions:
 $$ \partial_B  ( {\rm infdex}(\sigma) )={\rm infdex}(\sigma_0) .$$
\end{corollary}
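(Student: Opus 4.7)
The plan is to obtain the identity by combining Proposition \ref{IMC} (which trades the operation $C_{M_B^*}^{-1}i^*$ for multiplication by $d_B$ via a pushforward) with two properties of the infinitesimal index: naturality under a closed $G$-equivariant embedding whose moment map restricts correctly, and $S[\mathfrak{g}^*]$-linearity (interpreted through the Chern--Weil map as in the proof of Theorem \ref{finitu}).

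\medskip

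First, I would invoke Proposition \ref{IMC} applied to the inclusions
\[
T_G^*M_A \xrightarrow{\ s\ } \tilde T_G^*M_A \xrightarrow{\ i\ } T_G^*M_X,
\]
which gives $(i\circ s)_!(\sigma_0) = d_B\,\sigma$ in $H_{G,c}^{*+2|B|}(T_G^*M_X)$. Note that the moment map on $T^*M_A\subset T^*M_X$ is the restriction of the one on $T^*M_X$ (the $M_B^*$ and $M_B$ factors simply carry the zero action form at the zero section $T_G^*M_A$), so $(i\circ s)$ is an oriented $G$-embedding of the kind to which the pushforward $s_!$ of Section~4 applies.

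\medskip

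Next, I would apply $\infdex$ to both sides of $(i\circ s)_!(\sigma_0) = d_B\,\sigma$ and use the two properties separately. On the right-hand side, since $\infdex$ is $S[\mathfrak{g}^*]$-linear and the $S[\mathfrak{g}^*]$-action on $\mathcal S'(\mathfrak g^*)^G$ is the one described in the proof of Theorem \ref{finitu} (namely, a polynomial $p$ acts as the constant coefficient differential operator $p(\partial)$ coming from the Chern--Weil class $c(p)$ being multiplied under the exponential $e^{i\langle R,\xi\rangle}$), I get
\[
\infdex(d_B\,\sigma) = \partial_B\bigl(\infdex(\sigma)\bigr).
\]
On the left-hand side, naturality of $\infdex$ under the closed embedding $i\circ s$ — specifically the property established in \cite{dpv34} that for such embeddings with compatible moment maps one has $\infdex \circ (i\circ s)_! = \infdex$ — gives
\[
\infdex\bigl((i\circ s)_!(\sigma_0)\bigr) = \infdex(\sigma_0).
\]
Combining the two equalities yields $\infdex(\sigma_0) = \partial_B\bigl(\infdex(\sigma)\bigr)$.

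\medskip

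The only nonroutine input is the naturality statement $\infdex\circ(i\circ s)_! = \infdex$, i.e.\ that the infinitesimal index is preserved by the equivariant Thom pushforward along a closed embedding whose moment map is the restriction of the ambient one. This is the main point to be cited from \cite{dpv34}; granted it, together with the $S[\mathfrak{g}^*]$-linearity of $\infdex$ and Proposition \ref{IMC}, the corollary follows immediately. I expect the verification that both embeddings $s$ and $i$ meet the hypotheses for this naturality (orientation, compatibility of action forms, compact-support considerations for the representative forms) to be the step requiring the most care.
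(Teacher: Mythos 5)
Your proof is correct and is essentially the paper's own argument: the paper likewise combines Proposition \ref{IMC} (giving $(i\circ s)_!\sigma_0=i_!i^*(\sigma)=d_B\sigma$) with the fact that the infinitesimal index commutes with $i_!$ (\cite{dpv34}, Theorem 4.9) and is a morphism of $S[\mathfrak g^*]$-modules, where $S[\mathfrak g^*]$ acts on distributions by constant-coefficient differential operators. The "nonroutine input" you isolate is exactly the cited Theorem 4.9 of \cite{dpv34}.
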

\begin{proof} We use the fact that the infinitesimal index commutes with $i_!$ (see \cite{dpv34}, Theorem 4.9) and is a map of $S[\mathfrak g^*]$ modules.
\end{proof}

We  have defined in Definition \ref{gra} the space of distributions   $  {\mathcal G}(X)$ as those distributions $f$ on $\mathfrak g^*$  such  that $\partial_{X\setminus\underline r}f\in \mathcal S'(\mathfrak g^*, \underline r )$ for all $\underline t\in \mathcal S_X $ and $\tilde {\mathcal G}(X)$  as the $S[\mathfrak g^*]$ module generated by $  {\mathcal G}(X)$.
Then $\tilde {\mathcal G} _{i}(X)$ is the subspace in $\tilde {\mathcal G}(X)$ such that
$\partial_{X\setminus \underline t}f=0$ for all $\underline t\in \mathcal S_X{(i-1)}$.

\begin{lemma}\label{valindTotal} For each $  i\geq 0$,   ${\rm infdex}$ maps
$H_{G,c}^*(T^*_G M_{X,\geq i})$ to the space
$\tilde {\mathcal G}_{i}(X)$.
\end{lemma}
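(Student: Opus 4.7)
My plan is to verify the two conditions packaged into the definition of $\tilde{\mathcal G}_i(X)$: that $\infdex(\sigma)\in\tilde{\mathcal G}(X)$, and that $\partial_{X\setminus\underline t}\infdex(\sigma)=0$ for every $\underline t\in\mathcal S_X(i-1)$. The second condition is the easy half. Since $\dim\underline t=i-1<i$, Lemma \ref{laquetio} gives $d_{X\setminus\underline t}\cdot\sigma=0$ in $H^*_{G,c}(T_G^*M_{X,\geq i})$. The infinitesimal index is a map of $S[\mathfrak g^*]$-modules in which Chern--Weil multiplication by $p$ on the cohomology side corresponds to the constant-coefficient differential operator $\partial_p$ on the distribution side; this is precisely what Corollary \ref{nablaindex} (applied to the linear factors of $d_{X\setminus\underline t}$ in turn) encodes. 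Hence $\partial_{X\setminus\underline t}\infdex(\sigma)=\infdex(d_{X\setminus\underline t}\sigma)=0$ up to a nonzero scalar.

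For the harder half, that $\infdex(\sigma)\in\tilde{\mathcal G}(X)$, I would prove the full lemma by induction on $s-i$. The base case $i=s$ is exactly Theorem \ref{finitu}, which identifies $\infdex$ on $H^*_{G,c}(T_G^*M_X^{fin})$ with an isomorphism onto $D(X)=\tilde{\mathcal G}_s(X)$. For the inductive step I would compare two compatible short exact sequences: the topological one of Theorem \ref{Tmenos},
\[
0\to H^*_{G,c}(T_G^*M_{X,\geq i+1})\to H^*_{G,c}(T_G^*M_{X,\geq i})\stackrel{C_i^{-1}e^*}{\longrightarrow}\bigoplus_{\underline s\in\mathcal S_X(i)}H^*_{G,c}(T_G^*M_{\underline s}^f)\to 0,
\]
and the spline one of Remark \ref{lapes},
\[
0\to\tilde{\mathcal G}_{i+1}(X)\to\tilde{\mathcal G}_i(X)\stackrel{\mu_i}{\longrightarrow}\bigoplus_{\underline s\in\mathcal S_X(i)}D^{\mathfrak g}(X\cap\underline s)\to 0.
\]
The right-hand columns are intertwined by $\infdex$: for each $\underline s\in\mathcal S_X(i)$ the class $\sigma_{\underline s}=C_{\underline s}^{-1}e^*_{\underline s}(\sigma)\in H^*_{G,c}(T_G^*M_{\underline s}^f)$ satisfies $\infdex(\sigma_{\underline s})\in D^{\mathfrak g}(X\cap\underline s)$ by Theorem \ref{finitu} applied to $G/G_{\underline s}$ and the list $X\cap\underline s$ (and the $S[\mathfrak g^*]$-extension), while Corollary \ref{nablaindex}, applied after pushing $\sigma$ along the open inclusion $T_G^*M_{X,\geq i}\hookrightarrow T_G^*M_X$, yields $\partial_{X\setminus\underline s}\infdex(\sigma)=\infdex(\sigma_{\underline s})$.

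With these ingredients I would form the spline candidate
\[
\mu=\sum_{\underline s\in\mathcal S_X(i)}T_{X\setminus\underline s}^{F_{\underline s}}*\infdex(\sigma_{\underline s}),
\]
which lies in $\tilde{\mathcal G}_i(X)$ by Lemma \ref{lalari} (iii) (extended through the $S[\mathfrak g^*]$-action) and whose $\mu_i$-image is exactly $\oplus\,\infdex(\sigma_{\underline s})$, by the splitting $\mu_i\mathbf{P}_i=\mathrm{Id}$ established in Theorem \ref{lemexact}. To promote $\mu$ to $\infdex(\sigma)$, I would lift each $\sigma_{\underline s}$ back along the topological surjection, e.g.\ by a Thom--Gysin push $\hat\sigma_{\underline s}=e_*C_{\underline s}(\sigma_{\underline s})\in H^*_{G,c}(T_G^*M_{X,\geq i})$, arranged so that $\infdex(\hat\sigma_{\underline s})=T_{X\setminus\underline s}^{F_{\underline s}}*\infdex(\sigma_{\underline s})$; setting $\hat\sigma=\sum\hat\sigma_{\underline s}$, the difference $\sigma-\hat\sigma$ lies in the image of $j_*:H^*_{G,c}(T_G^*M_{X,\geq i+1})\to H^*_{G,c}(T_G^*M_{X,\geq i})$, the inductive hypothesis gives $\infdex(\sigma-\hat\sigma)\in\tilde{\mathcal G}_{i+1}(X)\subset\tilde{\mathcal G}_i(X)$, and adding $\mu$ delivers $\infdex(\sigma)\in\tilde{\mathcal G}_i(X)$.

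The main obstacle is the identification $\infdex(\hat\sigma_{\underline s})=T_{X\setminus\underline s}^{F_{\underline s}}*\infdex(\sigma_{\underline s})$, which is the geometric step that pairs the algebraic splitting $\mathbf{P}_i$ of Theorem \ref{lemexact} with a topological splitting of the sequence in Theorem \ref{Tmenos}. It amounts to evaluating the infinitesimal index of a concrete representative (a Thom form on $M_{X\setminus\underline s}^*$ cupped with the pullback of a representative of $\sigma_{\underline s}$) and recognising the resulting distribution as the convolution of the multivariate spline $T_{X\setminus\underline s}^{F_{\underline s}}$ with $\infdex(\sigma_{\underline s})$. This is the technical heart of the proof; the rest of the argument is diagram-chasing once this spline calculation is in hand.
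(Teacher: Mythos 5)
Your handling of the vanishing conditions $\partial_{X\setminus\underline t}\,{\rm infdex}(\sigma)=0$ for $\underline t\in\mathcal S_X(i-1)$ is exactly the paper's argument (Lemma \ref{laquetio} plus $S[\mathfrak g^*]$-linearity of the infinitesimal index), and you correctly isolate the remaining issue as membership of ${\rm infdex}(\sigma)$ in $\tilde{\mathcal G}(X)$. But your route to that membership has a genuine gap, which you yourself flag as the ``main obstacle'': the whole induction rests on the existence of a Thom--Gysin section $e_*$ of $C_i^{-1}e^*$ and on the identity ${\rm infdex}(e_*C_{\underline s}\sigma_{\underline s})=T^{F_{\underline s}}_{X\setminus\underline s}*{\rm infdex}(\sigma_{\underline s})$, neither of which you establish. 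The first is already problematic: $e$ is a closed embedding of $\tilde T^*_GM_{\underline s}^f$ into $T_G^*M_{X,\geq i}$, which is not a manifold (it is the zero locus of the moment map), so no off-the-shelf Gysin pushforward is available; note that the surjectivity in Theorem \ref{Tmenos} is obtained from the long exact sequence and parity vanishing, not from a splitting. The second identity is a substantive localization computation of the infinitesimal index of a Thom class along a stratum; it is essentially equivalent to the surjectivity and commutativity content of Theorem \ref{ilprinc}, which comes \emph{after} this lemma and depends on it. So the ``harder half'' as written is a plan, not a proof.

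The paper avoids all of this by verifying the defining support condition of $\mathcal G(X)$ directly. Since $j_*$ into $H^*_{G,c}(T^*_GM_X)$ commutes with ${\rm infdex}$, it suffices to treat $\sigma\in H^*_{G,c}(T^*_GM_X)$. For each rational $\underline s$, Corollary \ref{nablaindex} gives $\partial_{X\setminus\underline s}\,{\rm infdex}(\sigma)={\rm infdex}(\sigma_0)$ with $\sigma_0\in H^*_{G,c}(T^*_GM_{X\cap\underline s})$; because the $G$-action on $M_{X\cap\underline s}$ factors through $G/G_{\underline s}$, this cohomology is induced from $H^*_{G/G_{\underline s},c}$ and its infinitesimal index automatically lies in $\mathcal S'(\mathfrak g^*,\underline s)$. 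That is precisely the condition $\partial_{X\setminus\underline s}f\in\mathcal S'(\mathfrak g^*,\underline s)$ of Definition \ref{gra}, so ${\rm infdex}(\sigma)\in\tilde{\mathcal G}(X)$ with no induction, no splitting, and no spline computation. If you wish to pursue your strategy, you would in effect be proving a stronger statement than the lemma; the intended logical order is the reverse, with Lemma \ref{valindTotal} feeding into Theorem \ref{ilprinc}.
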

\begin{proof}
Denote  by $\ell:\tilde {\mathcal G} _{i}(X)\to \tilde {\mathcal G}(X)$ the inclusion.
By Lemma \ref{laquetio}, if $\sigma\in H_{G,c}^*(T_G^*M_{X,\geq i})$ and $\underline t$ is  a rational subspace of dimension strictly less than $i$,  we have $d_{X\setminus \underline t}  \sigma=0$. Thus  $\partial_{X\setminus \underline t}{\rm infdex}(\sigma)=0$.
It follows that the only thing we have to show is that, if $\sigma\in H_{G,c}^*(T^*_GM_X)$,  then
${\rm infdex}(\sigma)$  lies in $\tilde{\mathcal G}(X)$.
Take a rational subspace $\underline s$.  By Corollary \ref{nablaindex}, the infinitesimal index of $d_{X\setminus \underline s}  \sigma$ equals the infinitesimal index of an element $\sigma_0\in H_{G,c}^*(T^*_GM_{X\cap\underline s})$. But the action of $G$ on $M_{X\cap\underline s}$ factors though the quotient $G/G_{\underline s}$
whose Lie algebra is $\mathfrak g/\mathfrak g_{\underline s}$.
Thus $H_{G,c}^*(T_G^*M_{X\cap\underline s})\cong S[\mathfrak g^*]\otimes_{S[(\mathfrak g/\mathfrak g_{\underline s})^*]}H^*_{G/G_{\underline s},c}(T^*_{G/G_{\underline s}}M_{X\cap\underline s})$,
hence  $\partial_{X\setminus \underline s}{\rm infdex}(\sigma)\in S[\mathfrak g^*]
{\rm infdex}(H^*_{G/G_{\underline s},c}(T^*_{G/G_{\underline s}}M_{X\cap\underline s}))$.

But   ${\rm infdex}(H^*_{G/G_{\underline s},c}(T^*_{G/G_{\underline s}}M_{X\cap\underline s}))\subset  \mathcal S'(\mathfrak g^*, \underline r )$ hence the claim.
\end{proof}

The following   theorem characterizes the values of the infinitesimal index on  the entire $M_X$. This time,  we use the notations and the exact sequences contained in Theorem \ref{Tmenos} Remark \ref{lapes} and Corollary \ref{nablaindex}
\begin{theorem}\label{ilprinc} For each $0\leq i\leq s$,
\begin{itemize}
\item the diagram
$$\begin{CD}  0\to H_{G,c}^*(T_G^*M_{X,\geq i+1})@>{j_*}>> H_{G,c}^*(T_G^*M_{X,\geq i}) @>{C_{i}^{-1}e^*}>>H_{G,c}^*(T_G^* M_{=i})\to 0\\ @V{\rm infdex}VV@V{\rm infdex}VV@V{\rm infdex}VV@.\\
\hskip-1.0cm 0\to\tilde{\mathcal G}_{i+1}(X) @>\ell >> \tilde{\mathcal G}_{i}(X) @>  \mu_i>>
\oplus_{\underline s\in \mathcal S_X{(i)}}D^{\mathfrak g} (X\cap
\underline s )\to 0 \end{CD} $$
commutes.
\item Its vertical arrows are isomorphisms.

\item In particular, the infinitesimal index gives an isomorphism between  $H_{G,c}^*(T^*_GM_X)$ and $\tilde {\mathcal G}(X)$.

\end{itemize}

\end{theorem}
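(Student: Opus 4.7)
The plan is a descending induction on $i$ from $i=s$ down to $i=0$, applying the five-lemma to the commutative diagram at each step. The rows are already exact (top by Theorem \ref{Tmenos}, bottom by Remark \ref{lapes}), and the image of $\infdex$ lies in $\tilde{\mathcal G}_i(X)$ by Lemma \ref{valindTotal}. What remains is to verify the base case, the commutativity of the two squares, and that the right-hand vertical arrow is an isomorphism; given these, the five-lemma finishes the inductive step, and the third bullet follows by taking $i=0$, since $M_{X,\geq 0}=M_X$ and $\tilde{\mathcal G}_0(X)=\tilde{\mathcal G}(X)$.

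For the base case $i=s$ we have $M_{X,\geq s}=M_X^{fin}$ and $\tilde{\mathcal G}_s(X)=D(X)$ (since $\mathcal G(X)_s=D(X)$ by definition and $D(X)$ is already $S[\mathfrak g^*]$-stable); Theorem \ref{finitu} then gives the isomorphism directly. For the right vertical arrow at a general level $i$, the set $M_{=i}$ is the disjoint union over $\underline s\in\mathcal S_X(i)$ of the pieces $M_{\underline s}^f$, and the Thom isomorphism $C_{\underline s}$ identifies $H^*_{G,c}(\tilde T^*_G M_{\underline s}^f)$ with $H^*_{G,c}(T^*_G M_{\underline s}^f)$. Since $G$ acts on $M_{\underline s}^f$ through $G/G_{\underline s}$ and the list $X\cap\underline s$ spans $(\mathfrak g/\mathfrak g_{\underline s})^*$, applying Theorem \ref{finitu} to the quotient torus and extending scalars produces the isomorphism $H^*_{G,c}(T^*_G M_{\underline s}^f)\cong D^{\mathfrak g}(X\cap\underline s)$ as required.

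The commutativity of the left square is the functoriality of $\infdex$ under the open inclusion $j$, a standard property established in \cite{dpv34}. The right square is the delicate step: given $\sigma\in H^*_{G,c}(T^*_G M_{X,\geq i})$ and $\underline s\in\mathcal S_X(i)$, the $\underline s$-component of $C_i^{-1}e^*(\sigma)$ is obtained by restricting $\sigma$ to $\tilde T^*_G M_{\underline s}^f$ and inverting the Thom isomorphism $C_{\underline s}$ in the normal directions $M^*_{X\setminus\underline s}$. Writing $X=(X\cap\underline s)\sqcup(X\setminus\underline s)$ and applying Proposition \ref{IMC} together with Corollary \ref{nablaindex} (with $A=X\cap\underline s$ and $B=X\setminus\underline s$), this geometric operation is converted on the distributional side into $\partial_{X\setminus\underline s}\,\infdex(\sigma)$, which is exactly the $\underline s$-component of $\mu_i$. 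Although Corollary \ref{nablaindex} is stated on all of $T^*_G M_X$, the same local argument applies on the open subset $T^*_G M_{X,\geq i}$, since $M_{\underline s}^f\subset M_{X,=i}\subset M_{X,\geq i}$.

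The main obstacle is precisely this verification of the right square: translating the composition of restriction with an inverse-Thom in equivariant compactly supported cohomology into the constant-coefficient differential operator $\partial_{X\setminus\underline s}$ acting on the infinitesimal index. Once this is settled, the five-lemma, combined with the inductive hypothesis at $i+1$ and the isomorphism on the right-hand column, forces the middle vertical arrow to be an isomorphism, closing the induction and, at $i=0$, establishing the final statement for $T^*_G M_X$.
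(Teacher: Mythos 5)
Your proposal is correct and follows essentially the same route as the paper: descending induction with the five lemma, base case from Theorem \ref{finitu}, the right vertical isomorphism via extension of scalars from the quotient torus $G/G_{\underline s}$, and commutativity of the right square via Corollary \ref{nablaindex} applied with $B=X\setminus\underline s$. Your additional remarks (e.g.\ that Corollary \ref{nablaindex} must be transported to the open set $M_{X,\geq i}$) only make explicit steps the paper leaves implicit.
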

\begin{proof} Lemma \ref{valindTotal} tells us that the diagram is well defined. We need to prove commutativity.

We prove that  the square on the right hand side is commutative using   Corollary \ref{nablaindex}.
The square on the left hand side is commutative since $j_*$ is compatible with the infinitesimal index and $\ell$ is the inclusion.

Recall that $H_{G,c}^*(T_G^*M_{X\cap\underline s})\cong S[\mathfrak g^*]\otimes_{S[(\mathfrak g/\mathfrak g_{\underline s})^*]}H_{G/G_{\underline s},c}^*(T^*_{G/G_{\underline s}}M_{X\cap\underline s})$ and that $$D^{\mathfrak g}(X\cap
\underline s )=S[\mathfrak g^*] D (X\cap
\underline s )\cong S[\mathfrak g^*]\otimes_{S[(\mathfrak g/\mathfrak g_{\underline s})^*]}D(X\cap \underline s).$$  Using Theorem \ref{finitu},  this implies that the right vertical arrow is always an isomorphism.

We want to apply descending induction  on $i$. When $i+1=s$, since $M_{X,\geq s}= M_X^{fin}$ and $\tilde{\mathcal G}_{s-1}(X)=D(X)$, Theorem \ref{finitu}   gives that the left vertical arrow is an isomorphism. So assume that  the left vertical arrow is an isomorphism. We then deduce from the five Lemma that the central vertical arrow is an isomorphism and conclude.
\end{proof}

%\begin{remark}   Fourier transforms of elements in
% $\tilde{\mathcal G}_{i}(X)$  are supported on the closed set  $\cup_{\underline s\in\mathcal S^X_i}\underline s$.
%  This is again in agreement with the fixed point philosophy that we  recalled in Remark \ref{supportfourier}.  In fact,
%an element  $g\in \mathfrak g$ gives a 1--parameter group  with a fixed point   in $M_{\geq i}$ if and only if $g\in \cup_{\underline s\in\mathcal %S^X_i}$.
%\end{remark}

\end{document}